\newtheorem{thm}{Theorem}
\newtheorem{lem}{Lemma}
\newtheorem{prop}[thm]{Proposition}
\renewcommand{\Re}{\mathbb{R}}
\newcommand{\dd}{\delta_{\mathbf{diam}}}
\renewcommand{\Re}{{\mathbb{R}}}
\author{David Bryant\addressmark{1}\thanks{Email: \email{david.bryant@otago.ac.nz}}
\and Paul F. Tupper\addressmark{2}\thanks{Email: \email{pft3@math.sfu.ca}}}
\title{Diversities and the Geometry of Hypergraphs}
\address{\addressmark{1}Allan Wilson Centre, Dept.\ of Mathematics and Statistics, University of Otago, Dunedin 9054, New Zealand.\\
\addressmark{2}Dept.\ of Mathematics, Simon Fraser University. 8888 University Drive, Burnaby, British Columbia V5A 1S6, Canada.}
\keywords{Diversities, Metric embedding, Fractional Steiner Tree Packing, Hypergraphs}
\newcommand{\sT}{\mathscr{T}}
\newcommand{\vv}{\mathbf{v}}
\newcommand{\diam}{\mathrm{diam}}
\newcommand{\conv}[1] {\mathrm{conv}(#1)}
\renewcommand{\Cap}{\mathrm{Cap}}
\newcommand{\Dem}{\mathrm{Dem}}
\begin{document}
\maketitle

\begin{abstract}

%The embedding of finite metrics in $\ell_1$ has become a fundamental tool for both combinatorial optimization and large-scale data analysis. One important  application is to network flow problems in which there is close relation between max-flow min-cut theorems and the minimal distortion embeddings of metrics into $\ell_1$.  Here we show that this theory can be generalized considerably to encompass Steiner tree packing problems in both graphs and hypergraphs. Instead of the theory of $\ell_1$ metrics and minimal distortion embeddings, the parallel is the theory of \emph{diversities} recently introduced by Bryant and Tupper, and the corresponding theory of $\ell_1$ diversities and embeddings which we develop here.\\

The embedding of finite metrics in $\ell_1$ has become a fundamental tool for both combinatorial optimization and large-scale data analysis. One important  application is to network flow problems as there is close relation between max-flow min-cut theorems and the minimal distortion embeddings of metrics into $\ell_1$.  Here we show that this theory can be generalized to a larger set of combinatorial optimization problems on both graphs and hypergraphs. This theory is not built on metrics and metric embeddings, but on {\em diversities}, a type of multi-way metric introduced recently by the authors. We explore diversity embeddings, $\ell_1$ diversities, and their application to  {\em Steiner Tree Packing} and {\em Hypergraph Cut} problems.

\end{abstract}
%\section{}
%\subsection{}

\section{Introduction}

In their influential paper ``The Geometry of Graphs and its Algorithmic Applications",  Linial et al.~\cite{Linial95} introduce
 a novel and powerful set of techniques to the algorithm designer's toolkit.  They show how to use the mathematics of metric embeddings to help solve difficult problems in combinatorial optimization. The approach  inspired a large body of further work on metric 
embeddings and  their applications.
%
%
%has proved to be both groundbreaking and influential. They demonstrate an unlikely link between problems in combinatorial optimization and work on low distortion, low dimensional, embeddings of finite metrics. Bridging two diverse fields, they were able to apply a substantial body of work on metric geometry and obtain practical and theoretical advances in graph algorithms. Their work has inspired a great deal of further work on metric embeddings and their applications, and methods based on metric embeddings are now a conventional part of an algorithm designer's toolkit (see XYZ for review).

Our objective here is to show how this extensive body of work might be generalized to  the {\em geometry of hypergraphs}.
% march edit
Recall that a {\em hypergraph} $H=(V,E)$ consists of a set of vertices $V$ and a set of {\em hyperedges} $E$, where each $A \in E$ is a subset of $V$.
 The underlying geometric objects in this new context will not be metric spaces, but {\em diversities}, a generalization of metrics recently introduced by Bryant and Tupper \cite{Bryant12}. Diversities are a form of multi-way metric which have already given rise to a substantial, and novel, body of theory \cite{Bryant12,Herrmann12,Poelstra13,Espinola14}. We hope to demonstrate that a switch to diversities opens up a whole new array of problems and potential applications, potentially richer than that for metrics. 

The result of \cite{Linial95} which is of particular significance to us is  the use of metric embeddings to bound the difference between cuts and flows in a {\em multi-commodity flow} problem. Let $G=(V,E)$ be a graph with a non-negative edge capacity $C_{uv} \geq 0$ for every edge $\{u,v\} \in E$. We are given a set of {\em demands} $D_{uv} \geq 0$ for $u,v \in V$. The objective of the { multi-commodity flow} problem is to find the largest value of $f$ such that we can simultaneously flow at least $f \cdot D_{uv}$ units between $u$ and $v$ for all $u$ and $v$. As usual,
% the amount of flow into a vertex equals the amount leaving a vertex, and % DAVE CHECK
the total amount of flow along an edge cannot exceed its capacity.

Multi-commodity flow is a linear programming problem (LP) and  can be solved in polynomial time. The {\em dual} of the LP is a relaxation of a min-cut problem which generalizes several NP-hard graph partition problems. Given $S \subseteq V$ let $\Cap(S)$ be the sum of edge capacities of edges joining $S$ and $V \setminus S$ and let $\Dem(S)$ denote the sum of the demands for pairs $u,v$ with $u \in S$ and $v \in V \setminus S$. We then have $f \leq \frac{\Cap(S)}{\Dem(S)}$ for every  $S \subseteq V$. When there is a single demand, the minimum of $\frac{\Cap(S)}{\Dem(S)}$ equals the maximum value of $f$,  a consequence of the max-flow min-cut theorem.  In general, for more than one demand there will be a gap between the values of the minimum cut and the maximum flow. Linial et al \cite{Linial95}, building on the work of \cite{Leighton88}, show that this gap can be bounded by the {\em distortion} required to embed a particular metric $d$ (arising from the LP dual) into $\ell_1$ space. The metric $d$ is \emph{supported} on the graph $G(V,E)$, meaning that it is the shortest path metric for some weighting of the edges $E$.  By applying the extensive literature on distortion bounds for metric embeddings they obtain new approximation bounds for the min-cut problem.

In this paper we consider generalizations of  the multi-commodity flow and corresponding minimum cut problems. A natural generalization of the single-commodity maximum flow problem in a graph is \emph{fractional Steiner tree packing} \cite{Jain03}. Given a graph $G=(V,E)$ with weighted edges, and a subset $S \subseteq V$, find the maximum total weight of trees in $G$ spanning $S$ such 
% march edit
that the sum of the weights of trees containing an edge does not exceed the capacity of that edge. Whereas multi-commodity flows are typically used to model transport of physical substances (or vehicles), the Steiner tree packing problem arises from models of information, particularly the broadcasting of information (see \cite{Lau07} for references).

The fractional Steiner tree packing problem generalizes further to incorporate multiple commodities, a formulation which occurs naturally in  multicast and VLSI design applications (see \cite{Saad08}). For each $S \subseteq V$ we have a demand $D_S$ (possibly zero) and the set $\sT_S$ of trees in the graph spanning $S$. A {\em generalized flow} in this context is an assignment of non-negative weights $z_{t,S}$ to the trees in $\sT_S$ for all $S$, with the constraint that for each edge, the total weight of trees including that edge does not exceed the edge's capacity. The objective is to find the largest value of $f$ for which there is a flow with weights $z_{t,S}$ satisfying 
\begin{equation} \label{HSTconstraint}
\sum_{t \in \sT_S} z_{t,S} \geq f \cdot D_S
\end{equation}
for all demand sets $S$. 

These problems translate directly to hypergraphs, permitting far more complex relationships between the different capacity constraints. As  for graphs, we have demands $D_S$ defined for all $S \subseteq V$. Each hyperedge $A \in E$ has a non-negative capacity. We let $\sT_S$ denote the set of all {\em minimal connected sub-hypergraphs} which include $S$ (not necessarily trees). A flow in this context is an assignment of non-negative weights $z_{t,S}$ to the trees in $\sT_S$ for all $S$, with the constraint that for each hyperedge, the total weight of trees including that hyperedge does not exceed the hyperedge's capacity. As in the graph case, the aim is determine the largest value of $f$ for which there is a flow with weights $z_{t,S}$ satisfying the constraint \eqref{HSTconstraint} for all demand sets $S$.
%\[\sum_{t \in \sT_S} z_{t,S} \geq f D_S\]
%for all demand sets $S$. 

All of these generalizations of the  multi-commodity flow problem have a dual problem that is a relaxation of a  corresponding min-cut problem. For convenience, we assume any missing edges or hyperedges are included with capacity zero. 
For a subset $U \subseteq V$ let $\partial U$  be the set of edges or hyperedges which have endpoints in both $U$ and $V \setminus U$. The min-cut problem in the case of graphs is to find the cut $U$ minimizing
\[\frac{\sum\limits_{e \in \partial U} C_e}{\sum\limits_{ S \in \partial U } D_S}\]
where $e$ runs over all pairs of distinct vertices in $V$,
while in hypergraphs we find $U$ which minimizes
\[\frac{\sum\limits_{A \in \partial U} C_A}{\sum\limits_{B \in \partial U} D_B},\]
where $A, B$ run over all subsets of $V$.

In both problems the value of a min-cut is an upper bound for corresponding value of the maximum flow. Linial et al.\ \cite{Linial95} showed that the ratio between the min-cut and the max flow can be bounded using metric embeddings. Our main result is that this relationship generalizes to the fractional Steiner problem  with multiple demand sets, on both graphs and hypergraphs, once we consider diversities instead of metrics. The following theorems depend on the notions of diversities being supported on hypergraphs and $\ell_1$-embeddings of diversities, which we will define in subsequent sections.

\begin{thm} \label{major1}
Let $H = (V,E)$ be a hypergraph. Let $\{C_A\}_{A \in E}$ be a set of edge capacities and $\{D_S\}_{S \subseteq V}$ a set of demands. There is a diversity $(V,\delta)$ supported on $H$, such that the ratio of the min-cut to the maximum (generalized) flow for the hypergraph is bounded by the minimum distortion embedding of $\delta$ into $\ell_1$.
\end{thm}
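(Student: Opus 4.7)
The plan is to mirror the Linial et al.\ argument, replacing metrics by diversities and graph cuts by hypergraph cuts. First I would formulate the maximum generalized flow as a linear program in the variables $f$ and $\{z_{t,S}\}$ and compute its dual. The capacity constraints produce non-negative dual variables $w_A$ for each hyperedge $A \in E$, and each demand constraint produces a non-negative dual variable $y_S$ for $S \subseteq V$; the constraint dual to $z_{t,S}$ reads $\sum_{A \in t} w_A \geq y_S$, which at optimality forces
\[
y_S \;=\; \min_{t \in \sT_S}\; \sum_{A \in t} w_A \;=:\; \delta_w(S).
\]
The map $\delta_w$ is exactly the Steiner-type diversity on $V$ generated by the hyperedge weights $w$, and is by construction supported on $H$. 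After eliminating $y$, the dual becomes
\[
\frac{1}{\mathrm{MaxFlow}} \;=\; \min_{w \geq 0}\; \frac{\sum_{A \in E} C_A\, w_A}{\sum_{S \subseteq V} D_S\, \delta_w(S)}.
\]

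Second, I would verify that the hypergraph min-cut equals this same ratio restricted to $\{0,1\}$-valued weights. For $U \subseteq V$, setting $w_A = 1$ when $A \in \partial U$ and $0$ otherwise yields $\delta_w(S) = 1$ when $S$ meets both $U$ and $V \setminus U$ and $0$ otherwise, so the ratio collapses to $\Cap(\partial U)/\Dem(\partial U)$. Thus $\mathrm{MinCut} \geq 1/\mathrm{MaxFlow}$, and the reverse inequality, up to a distortion factor, is what remains.

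Third, let $\delta = \delta_{w^*}$ be a dual optimizer and suppose $\delta$ embeds into an $\ell_1$ diversity with distortion $D$. Using the fact (to be established earlier in the paper) that every $\ell_1$ diversity is a non-negative combination $\sum_i \lambda_i \delta_{U_i}$ of cut diversities, I would substitute this decomposition into the denominator of the dual ratio and pair it with the natural generating weights $w_A = \sum_i \lambda_i \mathbf{1}[A \in \partial U_i]$ in the numerator. The ratio then becomes $\sum_i \lambda_i \Cap(\partial U_i) / \sum_i \lambda_i \Dem(\partial U_i)$, and an averaging argument yields a single cut $U_i$ with $\Cap(\partial U_i)/\Dem(\partial U_i) \leq D/\mathrm{MaxFlow}$, giving $\mathrm{MinCut}/\mathrm{MaxFlow} \leq D$.

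The main obstacle is this third step. The analogous argument for metrics exploits linearity of the metric cut-ratio in a cut-metric decomposition; for diversities on hypergraphs, one must show that the decomposition of an $\ell_1$ diversity into cut diversities is compatible with both the numerator, which depends on the generating hyperedge weights $w$ rather than on $\delta$ directly, and the denominator, which is a sum over arbitrary subsets $S$. Constructing the correct generating weights for each cut diversity in the decomposition, and controlling how the embedding distortion passes through the nonlinear Steiner-type definition of $\delta_w$, is where the real work lies, and will rely on the structural results for $\ell_1$ diversities developed in the body of the paper.
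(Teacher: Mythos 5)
Your plan is essentially the paper's proof: (i) dualize the flow LP so that $\mathrm{MaxFlow}$ becomes a minimum of $(C\cdot\delta)/(D\cdot\delta)$ over diversities, attained at a hypergraph Steiner diversity supported on $H$; (ii) identify the min-cut with the same ratio minimized over cut diversities, hence over all $\ell_1$-embeddable diversities, using the cut decomposition plus exactly the averaging (mediant) argument you describe; (iii) compare the two via a minimal-distortion $\ell_1$ approximation $\hat\delta$ of the dual optimizer. Two corrections. First, strong duality gives $\mathrm{MaxFlow}$ itself, not $1/\mathrm{MaxFlow}$, as $\min_w (C\cdot w)/(D\cdot\delta_w)$ (as in Linial et al.); as written, your final step yields $\mathrm{MinCut}\cdot\mathrm{MaxFlow}\leq D$ rather than the intended $\mathrm{MinCut}/\mathrm{MaxFlow}\leq D$, so the reciprocal must be fixed throughout. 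Second, the ``main obstacle'' you flag dissolves: in the dual one may replace $w_A$ by $\delta_w(A)\leq w_A$ (this only lowers the objective and keeps the constraints, by the triangle inequality for diversities), so the numerator is $C\cdot\delta$ evaluated directly on hyperedges; and for a cut diversity $\delta_U(A)=\mathbf{1}[A\in\partial U]$, so $C\cdot\delta_U=\Cap(\partial U)$ with no separate generating weights needed. The distortion then enters only through the linear sandwich $\delta\leq\hat\delta\leq D\,\delta$, never through the nonlinear Steiner construction, and the argument closes exactly as in the paper.
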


Gupta et al.\ \cite{Gupta04} proved a converse of the result of Linial et al.\ by showing that, given any graph $G$ and metric $d$ supported on it, we could determine capacities and demands so that the bound given by the minimal distortion embedding of  $d$ into $\ell_1$ was tight. We establish the analogous result for the generalized flow problem in hypergraphs.

\begin{thm} \label{major2}
Let $H = (V,E)$ be a hypergraph, and let $\delta$ be a diversity supported on it. There is a set $\{C_A\}_{A \in E}$ of edge capacities 
% march edits
and a set $\{D_S\}_{S \subseteq V}$ of demands so that the ratio of the min-cut to the maximum (generalized) flow equals the distortion of the minimum distortion embedding of $\delta$ into $\ell_1$. \end{thm}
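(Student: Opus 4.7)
The plan is to use LP duality to construct capacities and demands so that $\delta$ becomes an optimal solution of the max-flow dual, and an optimal $\ell_1$-approximation of $\delta$ is realized by the min-cut. Since $\delta$ is supported on $H$, fix non-negative weights $\{w_A\}_{A\in E}$ whose induced Steiner diversity equals $\delta$, and set the hyperedge capacities $C_A := w_A$. For the demands, recall that every $\ell_1$-diversity decomposes as a non-negative combination of split diversities $\delta_U$ indexed by bipartitions $U\subseteq V$, so the minimum distortion of $\delta$ into $\ell_1$ equals the optimal value of
\begin{align*}
c^*(\delta) = \min c \quad\text{s.t.}\quad & \sum_U \alpha_U \delta_U(S) \geq \delta(S),\\
& \sum_U \alpha_U \delta_U(S) \leq c\,\delta(S) \quad\forall S,\qquad \alpha\geq 0.
\end{align*}
Its LP dual, in non-negative variables $\mu_S,\nu_S$, maximizes $\sum_S \mu_S\delta(S)$ subject to $\sum_S(\mu_S-\nu_S)\delta_U(S)\leq 0$ for every bipartition $U$ and $\sum_S \nu_S \delta(S) = 1$. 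Let $(\mu^*,\nu^*)$ be an optimal dual solution and set the demands $D_S := \mu^*_S$.

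We then verify matching bounds for the max-flow and min-cut. The max-flow dual, derived as for Theorem~\ref{major1}, takes the form $f^* = \min_{x\geq 0}\frac{\sum_A w_A x_A}{\sum_S \mu^*_S\,\delta_x(S)}$, with $\delta_x$ the Steiner diversity from weights $x$; complementary slackness between this LP and the distortion LP identifies $x=w$ as an optimizer, so $\delta$ itself is an optimal diversity for the max-flow dual. Theorem~\ref{major1} applied to $\delta$ then gives $f_{\text{cut}}/f^*\leq c^*(\delta)$. For the converse, observe that for each bipartition $U$ the cut capacity is $\sum_A w_A\,\delta_U(A)$ and the cut demand is $\sum_S \mu^*_S\,\delta_U(S)$. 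Using the dual inequality $\sum_S \mu^*_S\delta_U(S)\leq \sum_S \nu^*_S\delta_U(S)$ together with the Steiner-covering property---every minimal sub-hypergraph spanning a set crossing $U$ must contain a hyperedge of $\partial U$---a short calculation shows that each cut ratio exceeds the max-flow value by a factor of at least $c^*(\delta)$, so $f_{\text{cut}}/f^* \geq c^*(\delta)$.

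The main technical obstacle is this last step: translating the abstract LP inequality involving $\mu^*$ and $\nu^*$ into a concrete lower bound on each individual cut ratio. This requires the Steiner-covering property of minimal sub-hypergraphs (the hypergraph analogue of ``every $s$--$t$ path crosses every $s$--$t$ cut'') and a careful use of complementary slackness to ensure tightness at the minimizing cut. It is here that the hypergraph generalization adds real overhead beyond the metric/graph converse of Gupta et al., where splits correspond simply to bipartitions of the vertex set and the covering property is immediate.
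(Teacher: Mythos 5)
Your overall architecture is close in spirit to the paper's: the paper also extracts the capacity/demand pair from a duality statement about the cone of $\ell_1$-embeddable diversities (Claim A.2 of Gupta et al.), and the optimal dual pair $(\mu^*,\nu^*)$ of your distortion LP is essentially the pair $(D,C)$ that this claim produces. The fatal step is fixing $C_A:=w_A$. The correct capacities are $C=\nu^*$ (suitably pushed onto the hyperedges of $H$), not the weights defining $\delta$, and with $C=w$ the construction fails. Concretely, take $V=\{1,2,3\}$ with hyperedges the three pairs of weight $1$ and the triple $\{1,2,3\}$ of weight $1.9$. Then $\delta$ assigns $1$ to each pair and $1.9$ to the triple, and $k_1(\delta)=19/15$: any $\hat\delta=\sum_i a_i\delta_{\{i\}}$ with $\hat\delta\ge\delta$ has $\sum_i a_i\ge 1.9$, hence some pair value $a_i+a_j\ge\tfrac{2}{3}(1.9)=19/15$. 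The unique optimal $\mu^*$ is supported on the triple with $\mu^*_{\{1,2,3\}}=2/3$, so your construction gives $C=(1,1,1,1.9)$ and $D=(0,0,0,2/3)$. Then $\mathrm{MaxHSP}=5.1$ (pack the three two-edge paths with weight $1/2$ each plus the triple hyperedge with weight $1.9$, total $3.4$, divided by the demand $2/3$), while every cut has capacity $3.9$ and crossing demand $2/3$, so $\mathrm{MinHypCut}=5.85$ and the ratio is $5.85/5.1\approx 1.147\neq 19/15$. By contrast, $C=\nu^*=(1/3,1/3,1/3,0)$ with $D=\mu^*$ gives min-cut $1$ and max-packing $3/4$, ratio $4/3$.

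The same example falsifies your intermediate claim that ``complementary slackness identifies $x=w$ as an optimizer'': with $C=w$ and $D=\mu^*$ the minimizer of $(C\cdot\delta')/(D\cdot\delta')$ over diversities is the Steiner diversity whose triple value is $2$, not $\delta$ (triple value $1.9$); the distortion LP and the flow LP share no variables, so there is no complementary-slackness relation between them to invoke. Two further remarks. First, by fixing $C=w$ you never confront the one genuinely hypergraph-specific difficulty, namely that $\nu^*$ may put mass on sets that are not hyperedges; the paper handles this by redistributing $C_R$ onto hyperedges $h_1,\dots,h_k$ of a minimal connected cover of $R$ with $\delta(R)=\sum_i \delta(h_i)$, which preserves $C\cdot\delta$ and can only increase $C\cdot\mu$ for $\ell_1$-embeddable $\mu$. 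Second, your appeal to a ``Steiner-covering property'' for the lower bound is unnecessary once $C$ and $D$ are chosen correctly: the lower bound is the one-line chain
\[
k_1(\delta)=\frac{D\cdot\delta}{C\cdot\delta}\;\le\;\frac{\min_{\mu\in\Delta_1(V)}\,(C\cdot\mu)/(D\cdot\mu)}{(C\cdot\delta)/(D\cdot\delta)}\;\le\;\frac{\mathrm{MinHypCut}(V,C,D)}{\mathrm{MaxHSP}(V,C,D)},
\]
which uses only the constraint $\frac{D\cdot\mu}{C\cdot\mu}\le 1$ on the cone of $\ell_1$-embeddable diversities.
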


A major benefit of the link between min-cut and metric embeddings was that Linial et al.\ and others could make use of an extensive body of work on metric geometry to establish improved approximation bounds. In our context, the embeddings of diversities is an area which is almost completely unexplored. We prove a few preliminary bounds here, though much work remains. \\

The structure of this paper is as follows.  We begin in Section 2 with a brief review of diversity theory, including a list of examples of diversities. In Section 3 we focus on $L_1$ and $\ell_1$ diversities, which are the generalizations of $L_1$ and $\ell_1$ metrics. These diversities arise in a variety of different contexts. Fundamental properties of $L_1$ diversities are established, many of which closely parallel results on metrics. 

In Section 4 we show how the concepts of metric embedding and distortion are defined for diversities, and establish a range of preliminary bounds for distortion and dimension. Finally, in Section 5, we 
prove the analogues of Linial et al's \cite{Linial95} and Gupta et al's \cite{Gupta04} results on multi-commodity flows, as stated in Theorems~\ref{major1} and \ref{major2} above.

\section{Diversities}

%\subsection{Definition}

A {\em diversity} is a pair $(X,\delta)$ where $X$ is a set and $\delta$ is a function from the finite subsets of $X$ to $\Re$ satisfying  
\begin{quotation}
\noindent (D1) $\delta(A) \geq 0$, and $\delta(A) = 0$ if 
and only if 
$|A|\leq 1$. \\
(D2) If $B \neq \emptyset$ then $\delta(A\cup B) + \delta(B \cup C) \geq \delta(A \cup C)$
\end{quotation}
for all finite $A, B, C \subseteq X$.  Diversities are, in a sense, an extension of the metric concept. Indeed, every diversity has an {\em induced metric}, given by $d(a,b) = \delta(\{a,b\})$ for all $a,b \in X$. Note also that  $\delta$ is {\em monotonic}: $A \subseteq B$ implies $\delta(A) \leq \delta(B)$. For convenience, in the remainder of the paper we will relax condition (D1) and allow $\delta(A) = 0$ even when $|A|>1$.  Likewise, for metrics we allow $d(x,y)=0$ even if $x \neq y$.

%On occasion it is convenient to use a weaker version of (D1)\\
%\begin{quotation} \noindent
%(D1)' $\delta(A) \geq 0$, and $\delta(A) = 0$ if $|A|\leq 1$. 
%\end{quotation}
%Pairs $(X,\delta)$ satisfying $(D1)'$ and $(D2)$ are called {\em pseudo-diversities} (analogous to pseudo-metrics).
%

We  define embeddings and distortion for diversities in the same way as for metric spaces. Let $(X_1,\delta_1)$ and $(X_2,\delta_2)$ be two diversities and suppose $c \geq 1$. A map $\phi:X_1 \rightarrow X_2$ has {\em distortion} $c$ if there is $c_1,c_2 > 0$ such that $c = c_1 c_2$ and
\[\frac{1}{c_1} \delta_1(A) \leq \delta_2(\phi(A)) \leq c_2 \delta_1(A)\]
for all finite $A \subseteq X_1$. We say that $\phi$ is an {\em isometric embedding} if it has distortion $1$ and an {\em approximate embedding} otherwise. 
%We say that $(X_1,\delta_1)$ and $(X_2,\delta_2)$ are {\em isometric} if there is a bijection from $X_1$ to $X_2$ that is also an isometric embedding.

\subsection{Examples of diversities} \label{sec:DiversityList}

Bryant and Tupper \cite{Bryant12} provide several examples of diversities. We expand that list here. 

\begin{enumerate}
\item 
\noindent {\em Diameter diversity.}  Let $(X,d)$ be a metric space.  For all finite $A \subseteq X$ let 
\[
\dd (A) = \max_{a,b \in A} d(a,b) = \diam(A).
\]
\item {\em $\ell_1$ diversity.} Let $\ell^m_1$ denote the diversity $(\Re^m, \delta_1)$, where \[\delta_1(A) = \sum_i \max_{a,b} \{|a_i - b_i |:a,b \in A\}\]
for all finite $A \subset \Re^m$.
\item {\em $L_1$ diversity.} Let $(\Omega,\mathcal{A},\mu)$ be a measure space and let $L_1$ denote the set of all all measurable functions $f : \Omega \rightarrow \Re$ with $\int_\Omega |f(\omega)| d\mu(\omega) < \infty$. An $L_1$ diversity is a pair $(L_1,\delta_1)$ where $\delta_1(F)$ is given by
\[\delta_1(F) = \int_\Omega \max \{|f(\omega) - g(\omega)|:f,g \in F\}\,\, d \mu(\omega)\]
for all finite $F \subseteq L_1$. To see that $(L_1,\delta_1)$  satisfies (D2), consider the triangle inequality for the diameter diversity on a real line and integrate over $\omega$. 
\item {\em Phylogenetic diversity.}
Let $T$ be a phylogenetic tree with taxon set $X$. For each finite $A \subseteq X$, the {\em phylogenetic diversity} of $A$ is the length $\delta(A)$ of the smallest subtree of $T$ connecting taxa in $A$ \cite{Faith92,Steel05,Minh09,BryantKlaere}.
\item {\em Steiner diversity.} 
Let $(X,d)$ be a metric space. For each finite $A \subseteq X$ let $\delta(A)$ denote the minimum length of a Steiner tree connecting elements in $A$. 
% march edit
\item {\em Hypergraph Steiner diversity.}
Let $H = (X,E)$ be a hypergraph and let $w:E \rightarrow \Re_{ \geq 0}$ be a non-negative weight function. Given  $A \subseteq X$ let $\delta(A)$ denote the minimum of $w(E'):=\sum_{e \in E'} w(e)$ over all subsets $E' \subseteq E$ such that the sub-hypergraph induced by $E'$ is connected and includes $A$. Then $(X,\delta)$ is a diversity.
\item {\em Measure diversity.} 
Let $(M,\Sigma,\mu)$ be a measure space, where $\Sigma$ is a $\sigma$-algebra of subsets of $M$ and $\mu \colon \Sigma \rightarrow [0,\infty]$.  Let $X$ be the collection of all sets in $\Sigma$ with finite measure.  
For sets $E_1, E_2, \ldots, E_k \in X$ we let
\begin{equation}
\delta(\{E_1, E_2, \ldots, E_k\}) = \mu\left( \bigcup_{i=1}^k E_i \right) - \mu \left(\bigcap_{i=1}^k E_i \right) = \mu\left(\bigcup_{i=1}^k E_i  \setminus \bigcap_{i=1}^k E_i \right). \label{MeasureDiv} \end{equation}
%Note that this is pseudo-diversity, and is a diversity if $\emptyset$ is the only set with measure zero.
\item {\em Smallest Enclosing Ball diversity}. Let $(X,d)$ be a metric space. For each finite $A \subseteq X$ let $\delta(A)$ be the diameter of the smallest closed ball containing $A$. Note that if that every pair of points in $(X,d)$ are connected by a  geodesic then $(X,d)$ will be the induced metric of $(X,\delta)$, though this does not hold in general. 
\item {\em Travelling Salesman diversity.} Let $(X,d)$ be a metric space. For every finite $A \subseteq X$, let $\delta(A)$ be the minimum of 
\[\frac{1}{2} \left( d(a_1,a_2) + d(a_2,a_3) + \cdots +  d(a_{|A|},a_1) \right)\]
over all orderings $a_1,a_2,a_3,\ldots,a_{|A|}$ of $A$.
\item {\em Mean-width diversity}.  We define the \emph{mean-width diversity} for  finite $A \subset \Re^n$
as the mean width of $\conv{A}$, the convex hull of $A$, suitably scaled. Specifically,  given a compact convex set $K \subset \Re^n$
 and unit vector $u \in \Re^n$,
the width of $K$  in direction $u$ is given by
\[
w(K,u) = \max_{a \in K} a \cdot u  - \min_{a \in K} a \cdot u.
\]
That is, $w(K,u)$ is the minimum distance between two hyperplanes with normal $u$ which enclose $K$. The {\em mean width} of $K$ is given by
\[m_n(K) = \frac{1}{\mu_{n-1}(S^{n-1})} 
\int_{S^{n-1}} w(K,u)\, d\mu_{n-1}(u),\]
where $\mu_{n-1}$ denotes the surface measure on the unit sphere $S^{n-1}$ \cite{Taylor06}. Shephard \cite{Shephard68} observed that the mean width varies according to the space that $K$ is sitting in, whereas a scaled {\em absolute mean width}
\[M(K) = \frac{1}{\mu_n(S^{n})} \int_{S^{n-1}} w(K,u) \, d\mu_{n-1}(u)\]
does not. A simple calculation gives that for points $a,b$ in $\Re$ we have $M([a,b]) = \frac{1}{\pi} |a-b|$. Hence we define the {\em mean-width diversity} 
\begin{equation}
\label{eq:widthConstant}
\delta_w(A) = \pi M(\conv{A}) = \frac{\pi \mu_{n-1}(S^{n-1})}{\mu_n(S^n)} m_n(\conv{A}) = \frac{\pi}{B(n/2,1/2)} m_n(\conv{A}),
\end{equation}
so that the induced metric of $\delta_w$ is the Euclidean metric.  Here  $B(\cdot,\cdot)$ is the beta function.
Note that $\frac{\pi}{B(n/2,1/2)} = \sqrt{\frac{\pi}{2}} n^{1/2} + o(\frac{1}{\sqrt{n}})$, see \cite{Abramowitz64}.% as $n \rightarrow \infty$.
\item {\em $S$-diversity}. Let $X$ be a collection of random variables taking values in the same state space. For every finite $A = \{A_1,\ldots,A_k\} \subseteq X$ let $\delta(A)$ be the probability that $A_1,A_2,\ldots,A_k$ do not all have the same state. Then $(X,\delta)$ is a diversity, termed the {\em $S$-diversity} since $S$, the proportion of segregating (non-constant) sites, is a standard measure of genetic diversity in an alignment of genetic sequences (see, e.g. \cite{BryantKlaere}). 
%Note that if no two variables are identical with probability $1$ then $(X,\delta)$ is a diversity.
\end{enumerate}

Below, we will show that $\ell_1$ diversities, phylogenetic diversities, measure diversities, mean-width diversities and $S$-diversities are all examples of $L_1$-embeddable diversities.

\subsection{Extremal diversities}

In metric geometry we say that one metric {\em dominates} another on the same set if distances under the first metric are all greater than, or equal to, distances under the second. The relation forms a partial order on the cone of metrics for a set:
given any two metric spaces $(X,d_1)$ and $(X,d_2)$ we write $d_1 \preceq d_2$ if  $d_1(x,y) \leq d_2(x,y)$ for all $x,y \in X$. The partial order $\preceq$ provides a particularly useful characterization of the standard shortest-path graph metric $d_G$. Let $G = (V,E)$ be a graph with edge weights $w: E \rightarrow \Re_{\geq 0}$. The shortest path metric $d_G$ is then the unique, maximal metric (under $\preceq$) which satisfies 
$d(u,v) \leq w(\{u,v\})$ for all $\{u,v\} \in E$. Given that the {\em geometry of graphs} of  \cite{Linial95} is based on the shortest path metric, it is natural to explore what arises when we apply the same approach to diversities.

We say that a diversity $(X,\delta_2)$ dominates another diversity $(X,\delta_1)$   if $\delta_1(A) \leq \delta_2(A)$ for all finite $A \subseteq X$, in which case we write $\delta_1 \preceq \delta_2$. Applying these to graphs, and hypergraphs, we obtain the diversity analogue to the shortest-path metric.

\begin{thm} \label{thm:SteinMax}
\begin{enumerate}
\item
Let $G = (V,E)$ be a graph with non-negative weight function $w:E \rightarrow \Re_{\geq 0}$. The Steiner tree diversity is the unique maximal diversity $\delta$ such that $\delta(\{u,v\}) \leq w(\{u,v\})$ for all $\{u,v\} \in E$.
\item
Let $H = (V,E)$ be a hypergraph with non-negative weight function $w:E \rightarrow \Re_{ \geq 0}$. The hypergraph Steiner diversity is the unique maximal diversity $\delta$ such that $\delta(A) \leq w(A)$ for all $A \in E$.
\end{enumerate}
\end{thm}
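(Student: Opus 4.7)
The plan is to prove part (2) first and deduce part (1) by viewing a graph as a hypergraph with edges of size $2$. For part (2), we already know from Example~6 that the hypergraph Steiner diversity $\delta_H$ is a diversity. Taking $E' = \{A\}$ for any $A \in E$ shows immediately that $\delta_H$ satisfies $\delta_H(A) \leq w(A)$, so what remains is to show that any diversity $\delta$ obeying $\delta(A) \leq w(A)$ for all $A \in E$ is dominated by $\delta_H$.

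Fix a finite $A \subseteq V$ and let $E' = \{e_1, \ldots, e_k\} \subseteq E$ attain the minimum in the definition of $\delta_H(A)$, so that the sub-hypergraph induced by $E'$ is connected, contains $A$, and has total weight $\delta_H(A) = \sum_i w(e_i)$. Since the sub-hypergraph is connected, the intersection graph on $\{e_1,\ldots,e_k\}$ (adjacency by shared vertices) is connected, so we may re-index the hyperedges so that for every $j \geq 2$ we have $e_j \cap U_{j-1} \neq \emptyset$, where $U_j := e_1 \cup \cdots \cup e_j$. I would then apply (D2) to $A' := U_{j-1}$, $B' := e_j \cap U_{j-1}$, $C' := e_j$, noting that $A' \cup B' = U_{j-1}$, $B' \cup C' = e_j$, and $A' \cup C' = U_j$, so
\[
\delta(U_{j-1}) + \delta(e_j) \geq \delta(U_j).
\]
By induction on $j$ this yields $\delta(U_k) \leq \sum_{j=1}^k \delta(e_j)$. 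Combined with the hypothesis $\delta(e_j) \leq w(e_j)$ and monotonicity ($A \subseteq U_k$ because $E'$ includes $A$), we get
\[
\delta(A) \leq \delta(U_k) \leq \sum_{j=1}^k \delta(e_j) \leq \sum_{j=1}^k w(e_j) = \delta_H(A),
\]
which proves $\delta \preceq \delta_H$. Uniqueness of the maximum follows from antisymmetry of $\preceq$.

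Part (1) is then immediate: view $G$ as a hypergraph in which every hyperedge has size two. A connected sub-hypergraph is exactly a connected subgraph, and the minimum weight connected subgraph spanning a vertex subset $A$ is the Steiner tree on $A$, so $\delta_H$ coincides with the Steiner tree diversity on graphs, and part (2) gives the claim.

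The only nontrivial step is the telescoping application of (D2) along an ordering of the hyperedges in which each new edge meets the union of its predecessors. The main obstacle is ensuring that such an ordering exists and that the choice of $B$ in (D2) is always nonempty; both are guaranteed by the connectedness of the chosen sub-hypergraph, which is why this argument does not simply reduce to additivity along a path as in the metric/graph case.
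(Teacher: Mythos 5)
Your proof is correct and follows essentially the same route as the paper's: show $\delta_H(A)\leq w(A)$ via the single-edge sub-hypergraph, then dominate any competing $\delta$ by summing $\delta$ over the edges of an optimal connected $E'$. The only difference is that where the paper simply says ``multiple applications of the triangle inequality (D2)'', you spell out the ordering of hyperedges and the exact choice of $A',B',C'$ in (D2) that makes the telescoping work — a worthwhile elaboration, but not a different argument.
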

\begin{proof}
Note that 1.\ is a special case of 2. We prove 2.

Let $\delta_H$ denote the hypergraph Steiner diversity for $H$.  For any edge $A$, the edge itself forms a connected sub-hypergraph, so $\delta_H(A) \leq w(A)$. Let $\delta$ be any other diversity which also satisfies  $\delta(A) \leq w(A)$ for all $A \in E$. For all $B \subseteq V$ there is $E' \subseteq E$ such that the sub-hypergraph induced by $E'$ is connected, contains $B$, and has summed weight $\delta_H(B)$. Multiple applications of the triangle inequality (D2) gives
\[\delta(B) \leq \sum_{A \in E'} \delta(A) \leq \sum_{A \in E'} w(A) = \delta_H(B).\]
\end{proof}

As a further consequence, we can show that the hypergraph Steiner diversity dominates all diversities with a given induced metric.

\begin{thm} \label{DivBounds}
Let $(X,\delta)$ be a diversity with induced metric space $(X,d)$. Let $\dd$ denote the diameter diversity on $X$ and let $\delta_S$ denote the Steiner diversity on $X$. Then for all finite $A \subseteq X$,
\[\dd(A) \leq \delta(A) \leq \delta_S(A) \leq (|A|-1) \dd(A).\]
\end{thm}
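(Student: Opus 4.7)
The plan is to prove the three inequalities separately, each being a short argument that draws on a different property of diversities. The leftmost bound $\dd(A) \leq \delta(A)$ is immediate from monotonicity: for every $a,b \in A$ we have $\{a,b\} \subseteq A$, so $d(a,b) = \delta(\{a,b\}) \leq \delta(A)$, and taking the maximum over $a,b \in A$ yields $\dd(A) \leq \delta(A)$. The rightmost bound $\delta_S(A) \leq (|A|-1)\dd(A)$ is almost as quick: any spanning tree of the complete graph on $A$ with edge weights $d$ is itself a (degenerate) Steiner tree connecting $A$, so $\delta_S(A)$ is bounded by the weight of, say, a path through $A$, which has $|A|-1$ edges each of length at most $\diam(A) = \dd(A)$.

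The middle inequality $\delta(A) \leq \delta_S(A)$ is the substantive one and is where I would focus effort. The plan is to fix a minimum-weight Steiner tree $T$ realizing $\delta_S(A)$, with vertex set $V(T) \supseteq A$ (Steiner points possibly lying outside $A$) and edge set $E(T)$, and to show by induction on $|V(T)|$ that
\[
\delta(V(T)) \leq \sum_{\{u,v\} \in E(T)} d(u,v) = \delta_S(A).
\]
The base case is trivial, and for the inductive step I would pick any leaf $v$ of $T$ with unique neighbour $u$ and apply (D2) with $A' = V(T) \setminus \{v\}$, $B = \{u\}$, $C = \{v\}$ to obtain $\delta(V(T)) \leq \delta(V(T) \setminus \{v\}) + \delta(\{u,v\})$. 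Since $d$ is the induced metric of $\delta$ we have $\delta(\{u,v\}) = d(u,v)$, so the inductive hypothesis (applied to the tree $T$ with $v$ removed) completes the step. Monotonicity then gives $\delta(A) \leq \delta(V(T)) \leq \delta_S(A)$.

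The main obstacle is precisely this middle inequality, because the Steiner tree may contain vertices outside $A$, and we must control $\delta$ on sets that include these Steiner points. The leaf-stripping induction is what lets us reduce everything to edges (on which $\delta$ agrees with $d$) via $(|V(T)|-1)$ applications of the triangle inequality. An alternative, essentially equivalent, route would be to invoke Theorem~\ref{thm:SteinMax}(1) directly on the weighted graph $T$: since the restriction of $\delta$ to $V(T)$ is a diversity with $\delta(\{u,v\}) \leq d(u,v)$ on every edge of $T$, maximality of the Steiner diversity on $T$ yields the same bound. Either way, no further machinery is required, and the three inequalities together give the stated chain.
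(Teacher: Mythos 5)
Your proof is correct and follows essentially the same route as the paper: monotonicity for the first inequality, repeated applications of (D2) along the edges of a tree for the second (the paper invokes Theorem~\ref{thm:SteinMax}, whose proof is precisely your leaf-stripping induction), and a Hamiltonian-path bound for the third. If anything, your explicit treatment of Steiner points outside $A$ is more careful than the paper's one-line appeal to Theorem~\ref{thm:SteinMax} applied to the complete graph on the vertex set $A$ alone.
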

\begin{proof}
If $|A| \leq 1$ then $\dd(A) = \delta(A) = \delta_S(A) = 0$. Suppose $2\leq |A| <\infty$. 
There is $a,a'$ such that 
\[\dd(A) = d(a,a') = \delta(\{a,a'\}) \leq \delta(A),\]
the last inequality following from the monotonicity of $\delta$. Let $G$ be the complete graph with vertex set $A$ and edge weights $w(\{a,a'\}) = d(a,a')$. Then $\delta(A) \leq \delta_S(A)$ by Theorem~\ref{thm:SteinMax}. To obtain the final inequality, consider any ordering of the elements of $A$: $a_1, a_2, \ldots, a_{|A|}$. Then, using the triangle inequality repeatedly gives
\[
% march edit
\delta(A) \leq \delta(\{a_1,a_2\}) + \delta(\{a_2,a_3\}) + \cdots +\delta(\{a_{|A|-1},a_{|A|}\}) \leq (|A|-1) \dd(A).
\]
\end{proof}

\section{$L_1$-embeddable diversities}

\subsection{General Properties}

$L_1$ diversities were defined in Section~\ref{sec:DiversityList}. 
We say that a diversity $(X,\delta)$ is $L_1$-embeddable if there exists an isometric embedding of $(X,\delta)$ into an $L_1$ diversity. A direct consequence of the definition of $L_1$ diversities (and the direct sum of measure spaces) is that if $(X,\delta_1)$ and $(X,\delta_2)$ are both $L_1$ diversities then so are $(X,\delta_1 + \delta_2)$ and $\lambda \delta_1$ for $\lambda > 0$.  Hence the $L_1$-embeddable diversities on a given set form a cone.

Deza and Laurent \cite{Deza97} make a systematic study of the identities and inequalities satisfied by the cone of $L_1$ {\em metrics}. Much of this work will no doubt have analogues in diversity theory. For one thing, every identity for $L_1$ metrics is also an identity for the induced metrics of $L_1$ diversities. However $L_1$ diversities will satisfy a far richer collection of identities. One example is the following.

\begin{prop} \label{prop:circleL1}
Let $(X,\delta)$ be $L_1$-embeddable and let $A_1, \ldots, A_n$ be finite subsets of $X$ with union $A$. Then
\begin{equation}
\delta(A) \leq \frac{1}{2} \left( \delta(A_1 \cup A_2) + \delta(A_2 \cup A_3) + \cdots + \delta(A_n \cup A_1)\right). \label{eqn:circleL1}
\end{equation}
\end{prop}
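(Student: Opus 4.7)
The plan is to reduce the inequality to its pointwise analogue for the diameter diversity $\dd$ on $\Re$, which then becomes a short combinatorial argument on a cycle. Since $(X,\delta)$ is $L_1$-embeddable, it isometrically embeds into an $L_1$ diversity over some measure space $(\Omega,\mathcal{A},\mu)$, so
\[
\delta(F) = \int_\Omega \dd\bigl(\{f(\omega):f\in F\}\bigr)\, d\mu(\omega)
\]
for every finite $F \subseteq X$. By linearity of the integral it suffices to establish \eqref{eqn:circleL1} for $\dd$ on $\Re$ applied to the finite real sets $\{f(\omega):f\in A_i\}$, and then integrate in $\omega$.

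The core claim is therefore: for finite $A_1,\ldots,A_n \subseteq \Re$ with union $A$,
\[
\dd(A) \leq \tfrac{1}{2}\sum_{i=1}^{n} \dd(A_i \cup A_{i+1}), \qquad A_{n+1}:=A_1.
\]
Let $\alpha = \max A$ and $\beta = \min A$, attained in $A_p$ and $A_q$ respectively. Assume first that $p \neq q$. The cycle $1,2,\ldots,n,1$ decomposes into two edge-disjoint arcs from $p$ to $q$. Along any such arc $p = j_0, j_1, \ldots, j_k = q$, pick $x_s \in A_{j_s}$ with $x_0 = \alpha$ and $x_k = \beta$. Since $x_s, x_{s+1} \in A_{j_s} \cup A_{j_{s+1}}$, we have $\dd(A_{j_s} \cup A_{j_{s+1}}) \geq |x_s - x_{s+1}|$, so by the standard triangle inequality on $\Re$,
\[
\sum_{s=0}^{k-1} \dd(A_{j_s} \cup A_{j_{s+1}}) \;\geq\; \sum_{s=0}^{k-1} |x_s - x_{s+1}| \;\geq\; |\alpha - \beta| \;=\; \dd(A).
\]
Adding the two arcs yields the desired bound $\sum_{i=1}^n \dd(A_i \cup A_{i+1}) \geq 2\,\dd(A)$. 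The degenerate case $p = q$ is handled directly: $\dd(A) = \alpha - \beta \leq \dd(A_p)$, and monotonicity of $\dd$ gives $\dd(A_{p-1} \cup A_p), \dd(A_p \cup A_{p+1}) \geq \dd(A_p)$, so these two terms alone contribute at least $2\,\dd(A)$ to the cyclic sum.

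I expect the real content to be the one-dimensional combinatorial step above; the descent from an arbitrary $L_1$-embeddable diversity to an $L_1$ diversity and then to diameter diversities on $\Re$ is a routine unpacking of the definitions. The only non-automatic point in the combinatorics is remembering to split the cycle into \emph{two} arcs (and to dispose of the $p=q$ case separately), which is what produces the factor $\tfrac{1}{2}$ in the statement.
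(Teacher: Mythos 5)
Your proof is correct and follows essentially the same route as the paper's: reduce to the diameter diversity on $\Re$, split on whether the maximum and minimum of $A$ lie in the same block $A_p$ or in two different blocks, bound $|\alpha-\beta|$ by the triangle inequality along the two edge-disjoint arcs of the cycle, and integrate over the measure space. Your write-up of the two-arc step is in fact cleaner than the paper's (whose final display garbles the cyclic sum), so no changes are needed.
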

\begin{proof}
First suppose $(X,\delta)$ embeds isometrically in $\ell_1^1$,  the diameter diversity on $\Re$. Let $x_m$ and $x_M$ be the minimum and maximum elements in $A$. Identify $A_{n+1}$ with $A_1$ and $A_0$ with $A_n$. There is $i,j$ such that $x _m \in A_i$, $x_M \in A_j$ and, without loss of generality, $i \leq j$. 
If $i=j$ then 
\[\delta(A) \leq \delta(A_i) \leq \frac{1}{2} (\delta(A_i,A_{i-1}) + \delta(A_i,A_{i+1})).\]
If $i \neq j$ then, without loss of generality, $i<j$. Select $y_1,\ldots,y_n$ such that $y_i = x_m$, $y_j = x_M$ and $y_k \in A_k$ for all $k$. Then, considering two different paths from $y_i$ to $y_j$ we obtain
\begin{align*}
|y_i-y_j| & \leq |y_{i+1} - y_i| + |y_{i+2} - y_{i+1} | + \cdots + |y_{j} - y_{j-1}| \\
\intertext{and}
|y_i-y_j| & \leq |y_{i} - y_{i-1}| + \cdots + |y_2 - y_1| + |y_1 - y_n|+ |y_n - y_{n-1}| + \cdots + |y_{j+1} - y_{j} | \\
\end{align*}
so 
\begin{align*}
\delta(A) & = |x_M - x_m|   = |y_i-y_j|  \leq \frac{1}{2} \sum_{i=1}^{n-1} |y_{i+1} - y_1| + |y_n-y_1| 
 \leq \frac{1}{2} (\delta(A_i,A_{i-1}) + \delta(A_i,A_{i+1})).
\end{align*}
The case for general $L_1$-embeddable  diversities can be obtained by integrating this inequality over the measure space.
\end{proof}

Esp\'{i}nola and Pi{\c{a}}tek \cite{Espinola14} investigated when hyperconvexity for diversities implied hyperconvexity for their induced metrics, proving that this held whenever the induced metric $(X,d)$ of a diversity satisfies
\begin{equation} \label{eq:EPcondition}
(|A| - 1) \cdot \delta(A) \leq \sum_{1 \leq i<j\leq k} d(a_i,a_j)
\end{equation}
for all $A = \{a_1,\ldots,a_k\} \subseteq X$. (See \cite{Espinola14} for definitions and results). A consequence of Proposition~\ref{prop:circleL1} is that this property holds for all $L_1$-embeddable diversities.

\begin{prop}
If $(X,\delta)$ is $L_1$ embeddable then its induced metric $(X,d)$ satisfies \eqref{eq:EPcondition} for all finite $A \subseteq X$.
\end{prop}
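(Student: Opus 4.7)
The plan is to apply Proposition~\ref{prop:circleL1} repeatedly, once for each cyclic ordering of the points of $A$, and then average. Fix a finite set $A=\{a_1,\ldots,a_k\}\subseteq X$. For any permutation $\sigma$ of $\{1,\ldots,k\}$, apply Proposition~\ref{prop:circleL1} to the $k$ singleton sets $A_i = \{a_{\sigma(i)}\}$ (with indices read mod $k$). Since $\delta(\{x,y\}) = d(x,y)$ by definition of the induced metric, this yields
\[
\delta(A) \;\leq\; \tfrac{1}{2}\sum_{i=1}^{k} d\bigl(a_{\sigma(i)},a_{\sigma(i+1)}\bigr),
\]
that is, $\delta(A)$ is bounded above by half the total length of the Hamiltonian cycle on $A$ determined by $\sigma$.

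Next, I would sum this inequality over all $(k-1)!/2$ distinct undirected Hamiltonian cycles on $\{a_1,\ldots,a_k\}$. The left-hand side becomes $\tfrac{(k-1)!}{2}\,\delta(A)$. For the right-hand side, a straightforward double-counting argument shows that each unordered pair $\{a_i,a_j\}$ appears as an edge in exactly $(k-2)!$ of these cycles (by symmetry, every edge occurs equally often, and the total number of edge-slots $\tfrac{(k-1)!}{2}\cdot k$ divided by $\binom{k}{2}$ gives $(k-2)!$). Therefore summing gives
\[
\frac{(k-1)!}{2}\,\delta(A) \;\leq\; \frac{(k-2)!}{2}\sum_{1\leq i<j\leq k} d(a_i,a_j).
\]

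Dividing through by $(k-2)!/2$ yields $(k-1)\,\delta(A) \leq \sum_{i<j} d(a_i,a_j)$, which is exactly condition~\eqref{eq:EPcondition}. The cases $|A|\leq 1$ are trivial (both sides are zero), and for $|A|=2$ the inequality reduces to $d(a_1,a_2) \leq d(a_1,a_2)$.

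There is no real obstacle here: the only substantive input is Proposition~\ref{prop:circleL1}, and everything else is symmetry plus a counting identity. The mild subtlety worth double-checking is the edge-multiplicity count in the collection of undirected Hamiltonian cycles, but this is immediate once one observes that the symmetric group acts transitively on pairs of vertices.
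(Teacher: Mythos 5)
Your argument is correct and is essentially identical to the paper's proof: both apply Proposition~\ref{prop:circleL1} with singleton blocks to each of the $(k-1)!/2$ undirected Hamiltonian cycles on $A$, then average using the fact that each pair appears in exactly $(k-2)!$ of them. The edge-multiplicity count you flag as the one subtlety is exactly the count the paper uses, and your verification of it is right.
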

\begin{proof}
Suppose that $|A| = k$. There are $(k-1)!/2$ cycles of length $k$ through $A$, and each edge is contained in exactly $ (k-2)!$ such cycles. For each cycle $a_{\sigma(1)},a_{\sigma(2)},\ldots,a_{\sigma(k)}$ we have from Proposition~\ref{prop:circleL1} that
\[ d(a_{\sigma(k)},a_{\sigma(1)})  + \sum_{i=1}^{k-1} d(a_{\sigma(i)},a_{\sigma(i+1)})   \geq 2 \delta(A).\]
Hence
\begin{align*}
 \sum_{1 \leq i<j\leq k} d(a_i,a_j) & = \frac{1}{(k-2)!} \sum_\sigma \left(d(a_{\sigma(k)},a_{\sigma(1)})  + \sum_{i=1}^{k-1} d(a_{\sigma(i)},a_{\sigma(i+1)})   \right) \\
& \geq \frac{(k-1)!}{2(k-2)!} 2 \delta(A) \\
& = (k-1) \delta(A).
\end{align*}
\end{proof}

\subsection{Examples of $L_1$-embeddable diversities}

We now examine three examples of diversities $(X,\delta)$ which  are $L_1$-embeddable. In all three cases, the diversity need not be finite, nor even finite dimensional. Later, we examine $L_1$-embeddable diversities for finite sets.

\begin{prop}
Measure diversities, $S$-diversities and mean-width diversities are all $L_1$-embeddable.
\end{prop}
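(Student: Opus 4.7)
The plan is to exhibit, for each of the three diversity families, an explicit isometric embedding into an $L_1$ diversity by mapping each element of $X$ to an indicator-style real-valued function on a suitably chosen measure space. In every case the verification reduces to a pointwise identity showing that, almost everywhere on the auxiliary space, the diameter of the image of $F$ equals the integrand that defines $\delta(F)$; integrating then recovers the diversity.

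For measure diversities on $(M,\Sigma,\mu)$, I would take $\phi(E) = \mathbf{1}_E \in L_1(M,\mu)$. The pointwise identity
\[
\max_{i,j} \bigl|\mathbf{1}_{E_i}(x) - \mathbf{1}_{E_j}(x)\bigr| = \mathbf{1}_{\bigcup_i E_i \setminus \bigcap_i E_i}(x)
\]
(the maximum is $1$ exactly when $x$ lies in some $E_i$ and outside some $E_j$) integrates against $\mu$ to give $\delta_1(\phi(F)) = \mu\bigl(\bigcup_i E_i \setminus \bigcap_i E_i\bigr) = \delta(F)$, recovering \eqref{MeasureDiv}.

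For mean-width diversities, I would embed $\Re^n$ linearly into $L_1(S^{n-1},\nu)$, where $\nu = \frac{\pi}{\mu_n(S^n)}\mu_{n-1}$, via $\phi(a)(u) = a \cdot u$. The pointwise identity is the very definition of directional width,
\[
\max_{a,b \in A}\bigl|\phi(a)(u) - \phi(b)(u)\bigr| = \max_{a \in A} a\cdot u - \min_{a \in A} a\cdot u = w(\conv{A},u),
\]
and integrating against $\nu$ reproduces exactly the normalization in \eqref{eq:widthConstant} to yield $\delta_w(A)$. Checking that the functionals $u \mapsto a \cdot u$ are in $L_1(S^{n-1},\nu)$ is immediate since $S^{n-1}$ is compact.

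For $S$-diversities, the natural first attempt is to realize the diversity as a measure diversity by sending each random variable $A\colon\Omega\to S$ to its graph $\tilde A = \{(\omega,s) : A(\omega)=s\} \subseteq \Omega \times S$ under some product measure $P \otimes \kappa$, and then appeal to the measure diversity step. This is where the hard part lives: a direct counting shows that $(P \otimes \kappa)\bigl(\bigcup_i \tilde A_i \setminus \bigcap_i \tilde A_i\bigr)$ is proportional to the expected number $E[|\{A_i(\omega)\}|]$ of distinct values rather than the probability $P(|\{A_i(\omega)\}|\ge 2)$ one wants, so a more delicate encoding is required --- for example, a cut-based decomposition that weights two-part partitions of $S$, or a probabilistic selection reducing the multi-state problem to an average over binary diversities (the binary case itself being immediate, since for $S=\{0,1\}$ the map $A \mapsto \mathbf{1}_{A=1}$ realises the $S$-diversity as a measure diversity on $\Omega$). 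Producing a measure space and embedding whose pointwise integrand is exactly $\mathbf{1}_{|\{A_i(\omega)\}|\ge 2}$ is the main technical obstacle I would expect to have to resolve.
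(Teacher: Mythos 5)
Your first two cases coincide with the paper's proof: for measure diversities the paper also sends $E\mapsto\mathbf{1}_E$ and integrates the pointwise identity $\max_{i,j}|\mathbf{1}_{E_i}-\mathbf{1}_{E_j}|=\mathbf{1}_{\cup E_i\setminus\cap E_i}$, and for mean-width diversities it also uses $a\mapsto(u\mapsto a\cdot u)$ on $L_1(S^{n-1},\nu)$ with $\nu=\tfrac{\pi}{\mu_n(S^n)}\mu_{n-1}$, so that integrating the directional width reproduces \eqref{eq:widthConstant}. Both are correct and complete.

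The $S$-diversity case is a genuine gap in your write-up: you do not produce an embedding. But your diagnosis of the obstruction is exactly right, and it is worth knowing that the paper's own proof walks straight into it. The paper takes $f_\gamma(s,\omega)=\mathbf{1}_{X_\gamma(\omega)=s}$ on $\mathcal{S}\times\Omega$ and asserts $\delta_1(f_{\gamma_1},\ldots,f_{\gamma_k})=\mathbb{P}\{X_{\gamma_i}\neq X_{\gamma_j}\text{ for some }i,j\}$; in fact the left-hand side equals $\mathbb{E}\bigl[N\,\mathbf{1}_{\{N\geq 2\}}\bigr]$, where $N$ is the number of distinct states among $X_{\gamma_1}(\omega),\ldots,X_{\gamma_k}(\omega)$ --- already for $k=2$ it gives $2\,\mathbb{P}(X_{\gamma_1}\neq X_{\gamma_2})$ rather than $\mathbb{P}(X_{\gamma_1}\neq X_{\gamma_2})$, which is essentially your counting observation. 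Moreover, the ``more delicate encoding'' you hope for cannot exist once the state space has three or more states: every $L_1$ diversity satisfies the exact identity $\delta(\{a,b,c\})=\tfrac{1}{2}\bigl(d(a,b)+d(b,c)+d(a,c)\bigr)$, because the diameter of three reals is half the sum of their pairwise distances and one then integrates over $\omega$. Three a.s.\ distinct deterministic random variables have $S$-diversity equal to $1$ on the triple and on each pair, giving $1\neq\tfrac{3}{2}$, so this diversity is not $L_1$-embeddable. Hence no cut-based or averaging construction can close the gap in general; the claim is safe only in special situations such as a binary state space, where your reduction $A\mapsto\mathbf{1}_{\{A=1\}}$ does realise the $S$-diversity as a measure diversity. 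In short: two of your three cases are the paper's argument and are fine; the third you correctly declined to fake, because the assertion is false as stated and the paper's proof of it is erroneous.
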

\begin{proof}
We treat each kind of diversity in turn. \\
{\em Measure diversities}. \\ In a measure diversity any element  $A \in \Sigma$ can be naturally identified with the function $\mathbf{1}_A$ in $L_1(\Omega,\Sigma,\mu)$. Observe now that 
\begin{eqnarray*}
\delta(\{A_1, \ldots, A_n\}) & = & \mu( \cup_i A_i \setminus \cap_i A_i) \\
& = & \int_\Omega ( \max_i \mathbf{1}_{A_i}(\omega) - \min_i \mathbf{1}_{A_i}(\omega) ) \,d\mu(\omega)\\
& = & \int_\Omega \diam_i \{ \mathbf{1}_{A_i}(\omega)\} \, d\mu(\omega) \\
&= & \delta_1( \mathbf{1}_{A_1}, \ldots, \mathbf{1}_{A_n}).
\end{eqnarray*}
{\em Mean-width diversities. } \\ Let $(\Re^n,\delta_w)$ be the $n$-dimensional mean-width diversity. Consider $L_1(S^{n-1},\mathcal{B},\nu)$ where $S^{n-1}$ is the unit sphere in $\Re^n$, $\mathcal{B}$ is the Borel subsets of $S^{n-1}$ and $\nu$ is the measure given by $\nu(B) = \pi \mu_{n-1}(B)/ \mu_n(S^n)$ for all $B \in \mathcal{B}$ where $\mu_{n-1}$ is the surface measure on $S^{n-1}$. Let $\Phi(a)$ for $a \in \Re^k$ be the function $f_a (\vv) = a \cdot \vv$ for $\vv \in S^{k-1}$. Then
\[
\delta_w( a_1, \ldots, a_n ) = \int_{S^{k-1}} \diam( f_{a_1}(v) ,\ldots, f_{a_n}(v) ) \, d\nu(v).
\]
 Thus $(\Re^k, \delta_w)$ is embedded in $L_1$.
\\
{\em $S$-diversities}.\\ Let $(X,\delta)$ be an $S$-diversity. Suppose that the random variables in $X$ have state space $\mathcal{S}$ and that they are defined on the same probability space $(\Omega,\Sigma,\mu)$.  For each $X_\gamma \in X$ let $f_\gamma \colon \mathcal{S} \times \Omega \rightarrow \Re$ be given by $f_\gamma(s,\omega)=1$ if $X_\gamma(\omega)=s$ and $0$ otherwise. Then 
\begin{eqnarray*}
\delta_1( f_{\gamma_1},\ldots, f_{\gamma_k}) & = & \int_{S \times \Omega} \diam ( f_{\gamma_1}(s,\omega),\ldots, f_{\gamma_k}(s,\omega) )\, \, d\nu(s) \times d\mu(\omega) \\
& = & \mathbb{P} \{ X_{\gamma_i} \neq X_{\gamma_j} \mbox{ for some } i, j \}
\end{eqnarray*}
\end{proof}

In the case of measure diversities, we can also prove a converse result, in the sense that every $L_1$ diversity can be embedded in a measure diversity. We first make some observations about $\Re$. Consider the map $\phi \colon \Re \rightarrow \mathcal{P}(\Re)$ given by 
\[
\phi(x) = \left\{ \begin{array}{ll}
 \left[ 0, x \right] &  \mbox{if $x \geq 0$}; \\
 \left[ x, 0 \right)  &  \mbox{if $x < 0$}.
\end{array}
\right.
\]
Note that $d(x,y)= \lambda( (\phi(x) \cup \phi(y) ) \setminus (\phi(x) \cap \phi(y) )$, where $\lambda$ is Lebesgue measure on $\Re$. Furthermore, we have that
\[
\diam_i \{x_i\} = \lambda ( \cup_i \phi(x_i) \setminus \cap_i \phi(x_i) ).
\]
To see that this is true, we consider three cases. We let $x_m$ be the minimum of all $x_i$ and $x_M$ be the maximum.  In case 1, all the $x_i$ are non-negative. Then $\cup_i \phi(x_i) = [0,x_M]$ and $\cap_i \phi(x_i) = [0,x_m]$. This gives the result. In case 2, all the $x_i$ are negative and the result follows similarly. In case 3, some of the $x_i$ are positive and some of the $x_i$ are negative. In this case $\cup_i \phi(x_i) = [x_m,x_M]$ and $\cap_i \phi(x_i)$ is empty.

\begin{prop}
Any $L_1$-embeddable diversity can be embedded in a measure diversity.
\end{prop}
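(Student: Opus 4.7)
The plan is to reduce to embedding a single $L_1$ diversity $(L_1(\Omega,\mathcal{A},\mu),\delta_1)$ isometrically into a measure diversity: if $(X,\delta)$ is $L_1$-embeddable then by definition it embeds isometrically into some $L_1$ diversity, and composing with the embedding into a measure diversity delivers the result. So I focus on the $L_1$ case and use the set-valued map $\phi\colon \mathbb{R}\to\mathcal{P}(\mathbb{R})$ introduced just before the statement to convert real-valued functions into measurable sets.

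For each $f\in L_1(\Omega,\mathcal{A},\mu)$, I would define
\[\Phi(f) = \{(\omega,t)\in \Omega\times\mathbb{R} : t\in \phi(f(\omega))\},\]
a subset of the product measure space $(\Omega\times\mathbb{R},\mathcal{A}\otimes\mathcal{B},\mu\times\lambda)$, where $\mathcal{B}$ is the Borel $\sigma$-algebra and $\lambda$ is Lebesgue measure. Its $(\mu\times\lambda)$-measure equals $\|f\|_{L_1}<\infty$, so $\Phi(f)$ belongs to the set $X$ of finite-measure sets on which a measure diversity $\delta_M$ is defined.

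The main calculation will be, for a finite collection $F=\{f_1,\ldots,f_n\}\subseteq L_1$, to combine the identity $\diam_i\{x_i\} = \lambda(\cup_i \phi(x_i)\setminus \cap_i \phi(x_i))$ established just before the statement with Fubini's theorem to obtain
\begin{align*}
\delta_1(F) & = \int_\Omega \diam_i\{f_i(\omega)\}\,d\mu(\omega) \\
& = \int_\Omega \lambda\Bigl(\bigcup_i\phi(f_i(\omega))\setminus \bigcap_i\phi(f_i(\omega))\Bigr)\,d\mu(\omega) \\
& = (\mu\times\lambda)\Bigl(\bigcup_i\Phi(f_i)\setminus\bigcap_i\Phi(f_i)\Bigr) \\
& = \delta_M(\{\Phi(f_1),\ldots,\Phi(f_n)\}),
\end{align*}
which exhibits $f\mapsto\Phi(f)$ as an isometric embedding of $(L_1,\delta_1)$ into the measure diversity on $(\Omega\times\mathbb{R},\mathcal{A}\otimes\mathcal{B},\mu\times\lambda)$.

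The main technical obstacle will be verifying joint measurability of $\Phi(f)$, so that the indicator of $\bigcup_i \Phi(f_i)\setminus \bigcap_i \Phi(f_i)$ is actually integrable against $\mu\times\lambda$. This should follow from the standard fact that the hypograph $\{(\omega,t):0\le t\le f(\omega)\}$ (together with its reflection $\{(\omega,t):f(\omega)\le t<0\}$) lies in $\mathcal{A}\otimes\mathcal{B}$ whenever $f$ is $\mathcal{A}$-measurable. Once measurability is secured the Fubini step is routine, since the integrand is nonnegative.
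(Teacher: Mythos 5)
Your proof is correct and takes essentially the same route as the paper: reduce to the ambient $L_1$ diversity, send each $f$ to the region $\Phi(f)$ between its graph and zero in the product space $\Omega\times\mathbb{R}$, and integrate the one-dimensional identity $\diam_i\{x_i\}=\lambda\bigl(\cup_i\phi(x_i)\setminus\cap_i\phi(x_i)\bigr)$ over $\omega$ via Fubini--Tonelli. You are in fact slightly more careful than the paper, which writes the product space as $X\times\mathbb{R}$ (a slip for $\Omega\times\mathbb{R}$) and does not comment on the finiteness of $(\mu\times\lambda)(\Phi(f))$ or on joint measurability.
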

\begin{proof}
Without loss of generality, consider the diversity $(X,\delta_1)$ where $X$ is a subset of $L_1(\Omega,\mathcal{A},\mu)$.
%Let $f \in L_1(X,\mathcal{M},\mu)$. 
We construct a new measure space  $(X \times \Re,\mathcal{F},\mu \times \lambda)$, i.e. the product measure of $(X,\mathcal{M},\mu)$ with Lebesgue measure on $\Re$.  For $f \in X$ we define $\Phi(f) \subseteq X \times \Re$  by
\[
\Phi(f) = \left\{ (x,y) \in X \times \Re \, | \, y \in \phi(x) \right\}.
\]
We then have that for all finite subsets $\{f_1, \ldots, f_k\}$ of $X$ we have 
\[
\delta\left( \{f_1, \ldots, f_k\} \right) = \delta_{\mu \times \lambda}\left( \{ \Phi(f_1),\ldots,\Phi(f_k) \}\right).
\]
\end{proof}

\subsection{Finite, $L_1$-embeddable diversities}

Further results can be obtained for $L_1$-embeddable diversities $(X,\delta)$ when $X$ is finite, say $|X| = n$. In this case, the study of $L_1$ diversities reduces to the study of non-negative combinations of {\em cut diversities}, also called {\em split diversities}, that are directly analogous to {\em cut metrics}. Given $U \subseteq X$ define the diversity $\delta_U$ by
\[\delta_U(A) = \begin{cases} 1, & \mbox{ if $A \cap U$ and $A \setminus U$ both non-empty}; \\ 0, & \mbox{ otherwise.} \end{cases} \]
In other words, $\delta_U(A) = 1$ when $U$ cuts $A$ into two parts. The set of non-negative combinations of cut diversities for $X$ form a cone which equals the set of $L_1$-embeddable diversities on $X$.

\begin{prop} \label{charact_L1_embed}
Suppose that $|X| = n$ and $(X,\delta)$ is a diversity. The following are equivalent.
\begin{enumerate}
\item[(i)] $(X,\delta)$ is $L_1$-embeddable.
\item[(ii)] $(X,\delta)$ is $\ell_1^m$-embeddable for some $m \leq {n \choose \lfloor n/2 \rfloor}$.
\item[(iii)] $(X,\delta)$ is a {\em split system diversity} (see \cite{Herrmann12}). That is, $\delta$ is a non-negative combination of cut diversities. \end{enumerate}
\end{prop}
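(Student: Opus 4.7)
The plan is to prove the cycle (ii) $\Rightarrow$ (i) $\Rightarrow$ (iii) $\Rightarrow$ (ii). The implication (ii) $\Rightarrow$ (i) is immediate, since $\ell_1^m$ is the $L_1$ diversity arising from counting measure on $\{1,\ldots,m\}$. For (i) $\Rightarrow$ (iii), suppose $x \mapsto f_x$ isometrically embeds $(X,\delta)$ into some $L_1(\Omega,\mathcal{A},\mu)$. Partition $\Omega$ into at most $n!$ measurable pieces $\Omega_\sigma$, one for each permutation $\sigma$ of $X$, by letting $\omega \in \Omega_\sigma$ whenever $f_{\sigma(1)}(\omega) \leq \cdots \leq f_{\sigma(n)}(\omega)$ (breaking ties by any fixed rule). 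Each $\Omega_\sigma$ is measurable because only finitely many $f_x$ are involved. On $\Omega_\sigma$, a pointwise telescoping identity gives, for every finite $A \subseteq X$,
\[
\max_{x \in A} f_x(\omega) - \min_{x \in A} f_x(\omega) \;=\; \sum_{k=1}^{n-1} \bigl(f_{\sigma(k+1)}(\omega) - f_{\sigma(k)}(\omega)\bigr)\, \delta_{U_k^\sigma}(A),
\]
where $U_k^\sigma = \{\sigma(1),\ldots,\sigma(k)\}$ and each increment is non-negative on $\Omega_\sigma$. Integrating over each piece and summing over $\sigma$ yields $\delta = \sum_U c_U \delta_U$ with $c_U \geq 0$, which establishes (iii).

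For (iii) $\Rightarrow$ (ii), I first verify that any non-negatively weighted chain of cuts embeds into $\ell_1^1$. Precisely, for a chain $V_1 \subsetneq V_2 \subsetneq \cdots \subsetneq V_k$ in $2^X\setminus\{\emptyset,X\}$ with weights $d_1,\ldots,d_k \geq 0$, the map $\phi(x) = \sum_i d_i \mathbf{1}[x \notin V_i]$ satisfies $\max_A \phi - \min_A \phi = \sum_{i : V_i \text{ cuts } A} d_i$. This follows because the chain structure makes $i \mapsto \mathbf{1}[x \in V_i]$ monotone, so $\phi(x)$ depends only on the least index $i_0(x)$ with $x \in V_i$; the maximum and minimum over $A$ are then $\sum_{i < \max_A i_0} d_i$ and $\sum_{i < \min_A i_0} d_i$, and $V_i$ cuts $A$ exactly when $\min_A i_0 \leq i < \max_A i_0$. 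Next, by Sperner's theorem the maximum antichain in $2^X\setminus\{\emptyset,X\}$ has size $\binom{n}{\lfloor n/2 \rfloor}$, and so by Dilworth's theorem this poset admits a partition into $m := \binom{n}{\lfloor n/2 \rfloor}$ chains (the symmetric chain decomposition of $2^X$, restricted to non-trivial subsets, gives one concrete witness). Since $U \subsetneq U^c$ forces $U = \emptyset$, no chain contains both a set and its complement, so each cut $\{U,U^c\}$ has at most one representative in any single chain. Given $\delta = \sum c_U \delta_U$ from (iii), assign each weight $c_U$ to the chain containing $U$, apply the chain embedding to each chain, and take the direct sum to obtain an $\ell_1^m$ embedding of $\delta$.

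The crux is the (iii) $\Rightarrow$ (ii) direction, where I need the chain-embedding identity combined with the Sperner/Dilworth bound to pin down the precise dimension $\binom{n}{\lfloor n/2 \rfloor}$; a direct Carathéodory argument in the ambient $(2^n-n-1)$-dimensional space of diversities would give only a much weaker bound. The measurability details in (i) $\Rightarrow$ (iii) are routine because $X$ is finite, and I expect the main check to be the careful bookkeeping that assigns each cut's coefficient to a unique chain without double-counting.
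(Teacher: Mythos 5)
Your proof is correct and follows essentially the same route as the paper's: the same cycle of implications, with (i)$\Rightarrow$(iii) obtained by decomposing the pointwise diameter into non-negatively weighted ``threshold'' cut diversities and integrating, and (iii)$\Rightarrow$(ii) obtained from a Sperner/Dilworth chain partition of the cut system together with a one-dimensional embedding of each weighted chain. The only cosmetic differences are that the paper writes the pointwise cut weights directly as truncated gaps $\lambda(U,\omega)=\min\{f_u(\omega)-f_v(\omega)\}$ rather than partitioning $\Omega$ by orderings, and embeds each chain via $\phi(x)=\delta^C(\{x_0,x\})$ for a fixed basepoint $x_0$ rather than via your complement-indicator sum.
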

 \begin{proof}
(i)$\Rightarrow$(iii)\\ Let $\phi:x \mapsto f_x$ be an embedding from $X$ to $L_1(\Omega,\mathcal{A},\mu)$. For each $U \subseteq X$ and each $\omega \in \Omega$ let
\[\lambda(U,\omega) = \min\{f_u(\omega) - f_v(\omega):u \in U, v \in X \setminus U\}\]
letting $\lambda(U,\omega) = 0$ if this is negative. Define
\[\lambda(U) = \int_\Omega \lambda(U,\omega) \, d\mu(\omega).\]
Then for all $\omega$ and all $A \subseteq X$ we have
\[\diam\{ f_a(\omega):a \in A \} = \sum_U \lambda(U,\omega) \delta_U(A) \]
and so
\[\delta_1(A) = \int_\Omega \diam\{ f_a(\omega):a \in A \} \, d\mu(\omega) = \sum_U \lambda(U) \delta_U(A).\]
(iii) $\Rightarrow$ (ii).\\
%NEW VERSION \\
Fix $x_0\in X$. We can write $\delta$ as 
\[
\delta(A) = \sum_U \lambda_U \delta_U(A)
\]
for all $A \subseteq X$ where $U$ runs over all subsets of $X$ containing $x_0$.  This collection of subsets of $X$ can be partitioned into $m= {n \choose \lfloor n/2 \rfloor}$ disjoint chains by Dilworth's theorem. Denote these chains by $C_1, \ldots, C_m$ so that
\[
\delta(A) = \sum_{i=1}^m \sum_{U \in C_i} \lambda_U \delta_U (A).
\]
We will show that for every chain $C = C_i$  the diversity 
\[
\delta^C(A) = \sum_{U \in C} \lambda_U \delta_U (A)
\]
is $\mathbb{R}$-embeddable. The result follows. To this end, define $\phi \colon X \rightarrow \mathbb{R}$ by
\[
\phi(x) = \delta^C( \{ x_0, x\}) = \sum_{U \subseteq U_x} \lambda_U 
\]
where $U_x$ is the minimal element of the chain $C$ that contains $x$.
Then 
\begin{eqnarray*}
\delta_1(\phi(A)) & = & \diam \{ \phi(a) \colon a \in A\} \\ 
& = & \diam \{  \delta^C( \{ x_0, a\}) \colon a \in A\} \\
& = & \max_{a \in A} \sum_{U \subseteq U_a} \lambda_U - \min_{a \in A} \sum_{U \subseteq U_a} \lambda_U  \\
& =& \sum_{U \in C} \lambda_U \delta_U(A) \\
&=& \delta^C(A).
\end{eqnarray*}
 
%OLD VERSION \\
% Fix $x_0 \in X$. We can write $\delta$ as 
%\[
%\delta(A) = \sum_U \lambda_U \delta_U(A)
%\]
%for all $A \subseteq U$, where $U$ runs over all proper subsets of $X$ containing $x_0$, of which there are $m=2^{n-1}-1$.
%%Consider the following embedding from $X$ into $\Re^m$, where $m = 2^{n-1}-1$. Fix $x_0 \in X$.
% For each $x \in X$ define $\phi(x) \in \Re^m$ by
%\[\phi(x)_U =  \lambda_U \delta_U(\{x_0,x\}).\]
%We then have
%\begin{eqnarray*}
%\delta_1(\phi(A)) & = & \sum_U \diam \{  \phi(a)_U : a \in A\}  \\
%& = &  \sum_U \lambda_U \, \diam \{  \delta_U(\{x_0,a\}):  {a\in A} \} \\
%& = & \sum_U \lambda_U \delta_U(A) = \delta(A)
%\end{eqnarray*}
%for all $A \subseteq X$.\\
\noindent (ii) $\Rightarrow$ (i).\\
Follows from the fact that $\ell_1^m$ is itself an $L_1$ diversity.
\end{proof}

Diversities formed from combinations of split diversities were studied by \cite{Herrmann12} and in literature on phylogenetic diversities \cite{Moulton07,Spillner08,Minh09}. Proposition~\ref{prop:finiteL1} is a restatement of Theorems 3 and 4 in \cite{BryantKlaere}.

\begin{prop} \label{prop:finiteL1}
Let $(X,\delta)$ be a finite, $L_1$-embeddable diversity, where for all $A \subseteq X$, 
\[\delta(A) = \sum_{U \subseteq X} \lambda_U \delta_U(A),\]
where we assume $\lambda_U = \lambda_{(X \setminus U)}$. For all $A \subseteq X$ we have the identity
\begin{align}
\delta(A) &= \sum_{B \subseteq A} (-1)^{|B|} \delta(B)  \label{eq:FiniteRecurse}
\end{align}
and if $\emptyset \neq A \neq X$ we have
\begin{align}
\lambda_A &= \frac{1}{2} \sum_{B:A\subseteq B}(-1)^{|A|-|B|+1} \delta(B).
\end{align}
\end{prop}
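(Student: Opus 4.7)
The plan is to exploit the split decomposition from Proposition~\ref{charact_L1_embed}: since $(X,\delta)$ is finite and $L_1$-embeddable, we may write $\delta = \sum_U \lambda_U \delta_U$ as a non-negative combination of cut diversities, with the standing convention $\lambda_U = \lambda_{X \setminus U}$. Both identities are linear in $\delta$, so it suffices to verify each one cut-by-cut for $\delta_U$ and then let the symmetry combine contributions from $U$ and $X \setminus U$.

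For identity~\eqref{eq:FiniteRecurse}, the key observation is that for $B \neq \emptyset$,
\[
\delta_U(B) = 1 - \mathbf{1}[B \subseteq U] - \mathbf{1}[B \subseteq X \setminus U],
\]
while $\delta_U(\emptyset) = 0$. Substituting this into $\sum_{B \subseteq A}(-1)^{|B|}\delta_U(B)$ and applying three times the elementary vanishing $\sum_{B \subseteq C}(-1)^{|B|} = \mathbf{1}[C = \emptyset]$ collapses the expression to $1 - \mathbf{1}[A \cap U = \emptyset] - \mathbf{1}[A \subseteq U] = \delta_U(A)$ whenever $A \neq \emptyset$; the case $A = \emptyset$ is immediate.

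For the second identity, fix $A$ with $\emptyset \neq A \neq X$ and, for each $U$, compute
\[
T_U := \sum_{B : A \subseteq B}(-1)^{|B|}\delta_U(B).
\]
Parametrising $B = A \cup B'$ with $B' \subseteq X \setminus A$, split into three cases. If $A$ already meets both $U$ and $X \setminus U$, then $\delta_U(A \cup B') \equiv 1$ and $T_U = (-1)^{|A|}\sum_{B' \subseteq X \setminus A}(-1)^{|B'|} = 0$ since $A \neq X$. If $A \subseteq U$, a short computation using $\delta_U(A \cup B') = 1 - \mathbf{1}[B' \subseteq U]$ and the same Möbius-type vanishing gives $T_U = (-1)^{|A|+1}\mathbf{1}[U \subseteq A]$, so only $U = A$ survives. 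The case $A \subseteq X \setminus U$ is symmetric and leaves only $U = X \setminus A$. In both surviving cases $T_U = (-1)^{|A|+1}$, and hence
\[
\tfrac{1}{2}\sum_{B : A \subseteq B}(-1)^{|A|-|B|+1}\delta(B) = \tfrac{1}{2}\sum_U \lambda_U (-1)^{|A|+1} T_U = \tfrac{1}{2}(\lambda_A + \lambda_{X \setminus A}) = \lambda_A,
\]
using the symmetry $\lambda_U = \lambda_{X \setminus U}$ in the last step.

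The main bookkeeping obstacle is the case analysis of $T_U$: one must verify carefully that all cuts $U \notin \{A, X \setminus A\}$ really contribute zero, which depends on the hypothesis $\emptyset \neq A \neq X$ so that the auxiliary sums $\sum_{B' \subseteq X \setminus A}(-1)^{|B'|}$ and $\sum_{B' \subseteq (X \setminus A) \cap U}(-1)^{|B'|}$ actually vanish. The factor of $\tfrac{1}{2}$ in the Proposition is precisely the pairing of $U$ with $X \setminus U$ provided by the symmetry assumption.
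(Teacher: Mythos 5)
Your proof is correct. Note that the paper itself gives no argument for this proposition --- it is stated as ``a restatement of Theorems 3 and 4 in [Bryant and Klaere 2012]'' --- so your self-contained verification is a genuine addition rather than a rederivation of the paper's proof. Your strategy is the natural one: both identities are linear in $\delta$, so it suffices to check them on each cut diversity $\delta_U$, using $\delta_U(B)=1-\mathbf{1}[B\subseteq U]-\mathbf{1}[B\subseteq X\setminus U]$ for $B\neq\emptyset$ together with the vanishing $\sum_{B\subseteq C}(-1)^{|B|}=\mathbf{1}[C=\emptyset]$. I checked the details: the collapse $\sum_{B\subseteq A}(-1)^{|B|}\delta_U(B)=1-\mathbf{1}[A\cap U=\emptyset]-\mathbf{1}[A\subseteq U]=\delta_U(A)$ is right for $A\neq\emptyset$ (and the case $A=\emptyset$ is trivial); and in the computation of $T_U$ your three cases are exhaustive and mutually exclusive because $A\neq\emptyset$, the case-1 vanishing uses $A\neq X$ exactly as you say, and the surviving terms $U=A$ and $U=X\setminus A$ each contribute $(-1)^{|A|+1}$, so the symmetry $\lambda_U=\lambda_{X\setminus U}$ turns $\tfrac12(\lambda_A+\lambda_{X\setminus A})$ into $\lambda_A$. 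One stylistic remark: since the second identity recovers $\lambda_A$ by what is essentially Möbius inversion over the lattice of supersets of $A$, you could alternatively derive \eqref{eq:FiniteRecurse} from it rather than verifying both independently, but the cut-by-cut check you give is cleaner and requires no extra machinery.
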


From these we obtain the following characterization of  finite, $L_1$-embeddable metrics.

\begin{prop}
A finite diversity $(X,\delta)$ is $L_1$-embeddable if and only if it satisfies \eqref{eq:FiniteRecurse} and 
\begin{equation}
% march edit
 \sum_{B:A\subseteq B}(-1)^{|A|-|B|+1} \delta(B) \geq 0 \label{eq:FiniteL1Splits}
\end{equation}
for all $A \subseteq X$, such that $\emptyset \neq A \neq X$.
\end{prop}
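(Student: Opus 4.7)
The plan is to derive both directions from Proposition \ref{prop:finiteL1}.

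For the forward implication, if $(X,\delta)$ is $L_1$-embeddable then by Proposition \ref{charact_L1_embed} it is a non-negative combination of cut diversities. After splitting each coefficient evenly between $U$ and $X\setminus U$ we may assume $\delta = \sum_{\emptyset \neq U \neq X}\lambda_U \delta_U$ with $\lambda_U = \lambda_{X\setminus U} \geq 0$. Proposition \ref{prop:finiteL1} then supplies identity \eqref{eq:FiniteRecurse} and the formula $\lambda_A = \tfrac{1}{2}\sum_{B\supseteq A}(-1)^{|A|-|B|+1}\delta(B)$; inequality \eqref{eq:FiniteL1Splits} follows at once from $\lambda_A \geq 0$.

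For the converse, fix $x_0 \in X$ and let $\mathcal{U}= \{U : x_0 \in U,\, U \neq X\}$, a set of representatives for the splits of $X$. Set
\[
\mu_U^* := \sum_{B \supseteq U}(-1)^{|U|-|B|+1}\delta(B),\qquad \tilde\delta := \sum_{U \in \mathcal{U}}\mu_U^*\,\delta_U.
\]
Hypothesis \eqref{eq:FiniteL1Splits} makes each $\mu_U^*$ non-negative, so $\tilde\delta$ is a non-negative combination of cut diversities and hence $L_1$-embeddable by Proposition \ref{charact_L1_embed}. The task is thus reduced to proving $\delta = \tilde\delta$. Rewriting $\tilde\delta$ in the symmetric form (by splitting each $\mu_U^*$ evenly between $U$ and $X\setminus U$) and applying Proposition \ref{prop:finiteL1} to $\tilde\delta$, we obtain that $\tilde\delta$ also satisfies \eqref{eq:FiniteRecurse} and that $\sum_{B\supseteq A}(-1)^{|A|-|B|+1}\tilde\delta(B) = \mu_A^*$ for every $A \in \mathcal{U}$. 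Hence $\eta := \delta - \tilde\delta$ satisfies \eqref{eq:FiniteRecurse}, vanishes on $\emptyset$ and on singletons, and the quantity $h(U) := \sum_{B \supseteq U}(-1)^{|B|}\eta(B)$ is zero for every $U \in \mathcal{U}$.

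The main obstacle is promoting this partial vanishing of $h$ to $h \equiv 0$. I would exploit \eqref{eq:FiniteRecurse} for $\eta$ together with the Möbius representation $\eta(B) = \sum_{U\supseteq B}(-1)^{|U|}h(U)$: substituting into \eqref{eq:FiniteRecurse} and swapping the order of summation collapses the inner sum to an indicator, giving
\[
\sum_{U \supseteq A}(-1)^{|U|}h(U) \;=\; \sum_{U \cap A = \emptyset}(-1)^{|U|}h(U) \qquad \text{for every } A \subseteq X.
\]
The change of variables $U \mapsto X\setminus U$ on the right turns this into $\sum_{U\supseteq A}(-1)^{|U|}\bigl(h(U) - (-1)^n h(X\setminus U)\bigr)=0$ for all $A$, and a final Möbius inversion on the Boolean lattice yields the symmetry $h(U) = (-1)^n h(X\setminus U)$ for all $U$. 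Combining this with $h|_\mathcal{U}=0$ forces $h$ to vanish on every proper non-empty subset; the values $h(\emptyset)$ and $h(X)$ are then pinned down by $\eta(\emptyset)=0$. One last Möbius inversion recovers $\eta \equiv 0$, giving $\delta = \tilde\delta$ and therefore the $L_1$-embeddability of $\delta$.
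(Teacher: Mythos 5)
Your proof is correct, and the necessity direction is the same as the paper's (both read it straight off Proposition~\ref{prop:finiteL1}). For sufficiency, however, you take a genuinely different route. The paper argues by a dimension count: the map $\lambda \mapsto \sum_U \lambda_U \delta_U$ on symmetric weight functions is injective by the inversion formula of Proposition~\ref{prop:finiteL1}, so its image has dimension $2^{n-1}-1$ and lies inside the space of set functions satisfying \eqref{eq:FiniteRecurse}; the paper then asserts that this latter space also has dimension $2^{n-1}-1$ (via the observation that \eqref{eq:FiniteRecurse} determines the odd-cardinality values from the even ones), concludes the two spaces coincide, and finally invokes \eqref{eq:FiniteL1Splits} to make the recovered weights non-negative. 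You instead build the candidate split-system diversity $\tilde\delta$ explicitly from the inversion formula and prove $\delta = \tilde\delta$ head-on: the collapse of $\sum_{B \subseteq A \cap U}(-1)^{|B|}$ to the indicator of $A \cap U = \emptyset$, the complementation step yielding $h(U) = (-1)^n h(X\setminus U)$, and the pinning of $h(\emptyset)$ and $h(X)$ via $\eta(\emptyset)=0$ are all correct, and together they do force $h \equiv 0$ and hence $\eta \equiv 0$. What your approach buys is that it bypasses the one step of the paper's argument that is merely asserted --- the exact dimension of the solution space of \eqref{eq:FiniteRecurse} --- replacing it with an explicit M\"obius-inversion verification; the cost is a noticeably longer computation. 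The paper's approach buys brevity at the price of leaving that dimension count for the reader to check.
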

\begin{proof}
Necessity follows from Proposition~\ref{prop:finiteL1}. For sufficiency, observe that the map from a weight assignment $\lambda$ to a diversity $\sum_{U \subseteq X} \lambda_U \delta_U$ is linear and, by Proposition~\ref{prop:finiteL1}, invertible for the space of weight functions $\lambda$ satisfying $\lambda_U = \lambda_{X \setminus U}$ for all $U$. The image of this map therefore has dimension $2^{n-1}-1$. From \eqref{eq:FiniteRecurse} we that the diversities $\delta(A)$ for $|A|$ odd can be written in terms of diversities $\delta(A)$ for $|A|$ even. Hence the space of diversities satisfying  \eqref{eq:FiniteRecurse} has dimension $2^{n-1}-1$ and lies in the image of the map. Condition~\ref{eq:FiniteL1Splits} ensures that the diversity is given by a {\em non-negative} combination of cut diversities.
\end{proof}

\section{Minimal-distortion embedding of diversities}

Given two metric spaces $(X_1,d_1)$ and $(X_2,d_2)$ we can ask what is the minimal distortion embedding of $X_1$ into $X_2$, where the minimum is taken over all maps $\phi \colon X_1 \rightarrow X_2$. Naturally, we can ask the same question for diversities. Whereas the question for metric spaces is well-studied (though still containing many interesting open problems) the situation for diversities is almost completely unexplored. We state some preliminary bounds here, most of which leverage on metric results. We begin by proving bounds for several types of diversities defined on $\Re^k$.

\begin{lem} \label{lem:divRk}
Let $\delta^{(1)}_\diam$ and $\delta^{(2)}_\diam$ be the diameter diversities on $\Re^k$, evaluated using $\ell_1$ and $\ell_2$ metrics respectively. Let $\delta_1$ and $\delta_w$ be the $\ell_1$ and mean-width diversities on $\Re^k$. Then for all finite $A \subset \Re^k$
\begin{align}
\delta^{(1)}_\diam(A) \leq \delta_1(A) \leq k \delta^{(1)}_\diam(A) \label{eq:L1bound} \\
\delta^{(2)}_\diam(A) \leq \delta_w(A) \leq \mathcal{O}(\sqrt{k}) \delta^{(2)}_\diam(A) \label{eq:MWbound} 
\end{align}
All bounds are tight. 
\end{lem}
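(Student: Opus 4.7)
The plan is to verify each of the four bounds directly from the definitions and then exhibit specific finite subsets achieving tightness.

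For the first pair of inequalities \eqref{eq:L1bound}, both directions reduce to the elementary interchange of a sum and a maximum. The lower bound is immediate:
\[
\delta^{(1)}_\diam(A) = \max_{a,b \in A} \sum_{i=1}^k |a_i - b_i| \,\le\, \sum_{i=1}^k \max_{a,b \in A} |a_i - b_i| = \delta_1(A).
\]
For the upper bound, observe that for each coordinate $i$ and any $a,b \in A$, $|a_i - b_i| \le \|a - b\|_1 \le \delta^{(1)}_\diam(A)$, so summing over the $k$ coordinates yields $\delta_1(A) \le k\,\delta^{(1)}_\diam(A)$. Tightness of the lower inequality is trivially achieved by any two-point set. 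For the upper inequality I would take $A = \{\pm e_1, \pm e_2, \ldots, \pm e_k\} \subset \Re^k$: every coordinate attains both values $\pm 1$ in $A$, so $\delta_1(A) = 2k$, while $\|v - w\|_1 \le 2$ for any $v,w \in A$, so $\delta^{(1)}_\diam(A) = 2$, giving the ratio $k$ exactly.

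For the pair \eqref{eq:MWbound}, the lower bound follows from monotonicity of $\delta_w$ (which is a special case of condition (D2)) combined with the fact that $\delta_w$ induces the Euclidean metric: for every $a,b \in A$, $\|a-b\|_2 = \delta_w(\{a,b\}) \le \delta_w(A)$, and maximizing over $a,b$ gives $\delta^{(2)}_\diam(A) \le \delta_w(A)$. For the upper bound, the width of a convex body in any direction is bounded by its diameter, so $w(\conv A, u) \le \diam(\conv A) = \delta^{(2)}_\diam(A)$ for every unit vector $u$. Integrating over $S^{k-1}$ gives $m_k(\conv A) \le \delta^{(2)}_\diam(A)$, hence by \eqref{eq:widthConstant} and the asymptotic $\pi/B(k/2,1/2) = \sqrt{\pi/2}\, k^{1/2} + o(k^{-1/2})$ quoted in the excerpt,
\[
\delta_w(A) = \frac{\pi}{B(k/2,1/2)} m_k(\conv A) = \mathcal{O}(\sqrt{k})\,\delta^{(2)}_\diam(A).
\]

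Tightness of the lower bound in \eqref{eq:MWbound} is again attained by any two-point set (by construction of the normalizing constant). For the upper bound, the natural candidate is to take $A$ to be a sufficiently fine finite $\epsilon$-net of the unit sphere $S^{k-1}$, so that $\conv A$ Hausdorff-approximates the unit ball. Then $\diam(A) \to 2$ while $m_k(\conv A) \to m_k(B^k) = 2$, and so $\delta_w(A)/\delta^{(2)}_\diam(A) \to \pi/B(k/2,1/2) = \Theta(\sqrt{k})$. The main (minor) subtlety is verifying this limiting argument cleanly; the rest of the lemma is a straightforward unwinding of definitions.
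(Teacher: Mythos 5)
Your proof is correct, and for three of the four bounds (both $\ell_1$ inequalities and the lower mean-width inequality, together with the tightness examples) it coincides with the paper's argument: the lower bounds via monotonicity/induced metrics, the upper $\ell_1$ bound by summing coordinatewise maxima, equality on $\{\pm e_1,\ldots,\pm e_k\}$, and near-equality for points on a sphere. Where you genuinely diverge is the upper bound in \eqref{eq:MWbound}: the paper invokes Jung's theorem to enclose $A$ in a ball of radius $r \leq d\sqrt{k/(2(k+1))}$ and then bounds the mean width of $\conv{A}$ by that of the ball, obtaining $\delta_w(A) \leq \sqrt{2}\,\pi/B(k/2,1/2)\cdot\delta^{(2)}_\diam(A)$; you instead use the pointwise estimate $w(\conv{A},u) = (a-b)\cdot u \leq \|a-b\|_2 \leq \delta^{(2)}_\diam(A)$ and integrate. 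Your route is more elementary (no Jung's theorem) and yields the sharper constant $\pi/B(k/2,1/2)$, which is exactly the ratio attained in the limit by your spherical $\epsilon$-net example, so your version actually makes the ``tight'' claim cleaner than the paper's, whose Jung-based constant carries a spurious factor of $\sqrt{2}$. The only loose end is the limiting argument for the sphere example, which you flag yourself; it is routine (mean width and diameter are both continuous under Hausdorff approximation of the ball by $\conv{A}$) and the paper leaves the same step implicit.
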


\begin{proof}
The inequalities $\delta^{(1)}_\diam(A) \leq \delta_1(A)$ and $\delta^{(2)}_\diam(A) \leq \delta_w(A)$ are due to 
Theorem~\ref{DivBounds}. 

To prove the $\ell_1$ bounds, note that for each dimension $i$ there are $a^{(i)},b^{(i)} \in A$ which maximize $|a_i - b_i|$. Hence
\begin{align*}
\delta_1(A) & = \sum_{i=1}^k \max\{|a_i - b_i|:a,b \in A\} \\
& = \sum_{i=1}^k |a^{(i)}_i - b^{(i)}_i| \\
& \leq \sum_{i=1}^k d_1(a^{(i)}, b^{(i)}) \\
& \leq k \delta^{(1)}_\diam(A) 
\end{align*}
with equality given by subsets of $\{ \pm e_i:i=1,\ldots,k\}$.

To prove the mean-width bound note that, by Jung's theorem \cite{Danzer63}, a set of points in $\Re^k$ with diameter $d=\delta^{(2)}_\diam(A)$ is contained in some sphere with radius $r$, where 
\[r \leq d \sqrt{\frac{k}{2(k+1)}} \leq \frac{d \sqrt{2}}{2}.\]
Hence $\conv{A}$ is contained in a set with mean width $2r \leq d \sqrt{2}$. From \eqref{eq:widthConstant} we have
\[\delta_w(A) \leq d \sqrt{2} \pi \frac{\mu(S^{n-1})}{\mu(S^n)} =  \frac{\sqrt{2} \pi}{B(k/2,1/2)}  \delta^{(2)}_\diam(A) = \mathcal{O}(\sqrt{k}) \delta^{(2)}_\diam(A),\]
where again $B(\cdot,\cdot)$ denotes the beta function.
The bound holds in the limit for points distributed on the surface of a sphere.
\end{proof}

We now investigate upper bounds for the distortion of diversities into $L_1$ space. To begin, we consider only diversities which are themselves diameter diversities. In many senses, these diversities are similar to metrics, and it is perhaps no surprise that they can embedded with a similar upper bound as their metric counterparts.

\begin{prop}
Let $(X,d)$ be a metric space, $|X| = n$, and let $(X,\dd)$ be the corresponding diameter diversity. 
\begin{enumerate}
\item There is an embedding of $(X,\dd)$ in $\ell_1^k$ with distortion $\mathcal{O}(\log^2 n )$ and  
$k=\mathcal{O} (\log n)$.
\item There is an embedding of $(X,\dd)$ in $(\Re^k,\delta_w)$ with distortion $\mathcal{O}(\log^{3/2} n)$
and $k=\mathcal{O} (\log n )$.
\end{enumerate}
\end{prop}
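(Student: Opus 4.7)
The plan is to reduce both parts to a metric embedding of the underlying space $(X,d)$ and then invoke Lemma~\ref{lem:divRk} to control the blow-up incurred by passing from a diameter diversity in $\Re^k$ to the $\ell_1$ or mean-width diversity in $\Re^k$. The key observation is that if $\phi:(X,d)\to(\Re^k,\|\cdot\|_p)$ is a metric embedding with distortion $c=c_1c_2$, then because $\dd(A)=\max_{a,b\in A}d(a,b)$ both sides of the distortion inequality pass from pairs to diameters unchanged, giving
\[
\tfrac{1}{c_1}\,\dd(A)\;\leq\;\delta^{(p)}_\diam(\phi(A))\;\leq\; c_2\,\dd(A)
\]
for every finite $A\subseteq X$; so $\phi$ is also a distortion-$c$ embedding of the diameter diversity $(X,\dd)$ into $(\Re^k,\delta^{(p)}_\diam)$.

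First I would invoke a classical metric embedding of $(X,d)$ with both distortion $c=\mathcal{O}(\log n)$ and dimension $k=\mathcal{O}(\log n)$. For $p=2$ this is Bourgain's theorem followed by the Johnson--Lindenstrauss lemma. For $p=1$ one combines Bourgain's embedding with a Cauchy-stable random projection, or equivalently exploits the fact that an appropriate random sub-sampling of the Fr\'echet coordinates already achieves logarithmic distortion in logarithmic dimension. Either way one obtains the required $\phi$.

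Next I would chain the above with Lemma~\ref{lem:divRk}. For part 1 the lemma gives $\delta^{(1)}_\diam(\phi(A))\leq\delta_1(\phi(A))\leq k\,\delta^{(1)}_\diam(\phi(A))$, hence
\[
\tfrac{1}{c_1}\,\dd(A)\;\leq\;\delta_1(\phi(A))\;\leq\; kc_2\,\dd(A),
\]
yielding diversity distortion $kc=\mathcal{O}(\log^2 n)$. For part 2 the lemma gives $\delta^{(2)}_\diam(\phi(A))\leq\delta_w(\phi(A))\leq\mathcal{O}(\sqrt{k})\,\delta^{(2)}_\diam(\phi(A))$, hence
\[
\tfrac{1}{c_1}\,\dd(A)\;\leq\;\delta_w(\phi(A))\;\leq\;\mathcal{O}(\sqrt{k})\,c_2\,\dd(A),
\]
yielding diversity distortion $\mathcal{O}(\sqrt{k}\,c)=\mathcal{O}(\log^{3/2} n)$.

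The main obstacle is the metric-theoretic input: securing an embedding that is \emph{simultaneously} logarithmic in distortion and in dimension. For $\ell_2$ this is completely standard, but for $\ell_1$ the dimension reduction is more delicate since there is no direct analogue of Johnson--Lindenstrauss; fortunately the constructions already available in Linial et al.~\cite{Linial95} and subsequent work supply the required parameters. Once this input is in hand, the two diversity bounds fall out mechanically from Lemma~\ref{lem:divRk}, and the $k$ versus $\sqrt{k}$ gap in the lemma is precisely what explains the factor of $\sqrt{\log n}$ separating parts 1 and 2.
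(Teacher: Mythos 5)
Your proposal is correct and follows essentially the same route as the paper: embed the underlying metric $(X,d)$ into $\ell_1^k$ (resp.\ $\ell_2^k$) with distortion and dimension $\mathcal{O}(\log n)$ via Linial et al., note that a metric embedding of distortion $c$ is automatically a distortion-$c$ embedding of diameter diversities, and then chain with Lemma~\ref{lem:divRk} to pay the extra factor of $k$ (resp.\ $\sqrt{k}$). Your explicit remark that the pair-to-diameter passage preserves distortion is the same observation the paper uses implicitly in its chain of inequalities.
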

\begin{proof}
1. Any metric on $n$ points can be embedded into the metric space $\ell_1^k = (\Re^k,d_1)$ with distortion $\mathcal{O}(\log n)$, where $k = \mathcal{O}(\log n)$ \cite{Linial95}.  Let $\phi$ be an embedding for $(X,d)$
with $d(x,y) \leq d_1(\phi(x),\phi(y)) \leq K d_1(x,y)$ for all $x,y \in X$, 
where $K$ is $O(\log n)$. As above, we let $\dd^{(1)}$ denote the diameter diversity for the $\ell_1^k$ metric. 
For all $A \subseteq X$ we have from Lemma~\ref{lem:divRk} that
\[\dd(A) \leq \dd^{(1)}(\phi(A)) \leq \delta_1(\phi(A)) \leq k \dd^{(1)}(\phi(A)) \leq k\cdot K \dd(A).\]
The result now follows since $k$ is $\mathcal{O}(\log n)$ and $K$ is $\mathcal{O}(\log n)$.

2. As shown in \cite{Linial95} (see also \cite{Bourgain85}), there is an embedding $\phi$ of $(X,d)$ into $\ell_2^k$ with
\[d(x,y) \leq d_2(\phi(x),\phi(y)) \leq K d(x,y)\]
for all $x,y \in X$, where $k$ and $K$ are $\mathcal{O} ( \log n )$. 
For all $A \subseteq X$ we have from Lemma~\ref{lem:divRk} that
\[\dd(A) \leq \dd^{(2)}(\phi(A)) \leq \delta_w(\phi(A)) \leq \mathcal{O}(\sqrt{k}) \dd^{(2)}(\phi(A)) \leq \mathcal{O}(\sqrt{k})\cdot K \dd(A).\]
The result follows.
\end{proof}

We now consider the problem of embedding general diversities. The bounds we obtain here can definitely be improved: we do little more than slightly extend the results for diameter diversities. 

\begin{thm} \label{thm:upperb}
Let $(X,\delta)$ be a diversity with $|X|=n$.  
\begin{enumerate}
\item
$(X,\delta)$ can be embedded in $\ell_1^k$ with $k=\mathcal{O} (\log n )$ with distortion $\mathcal{O}(n \log^2 n)$.
\item 
$(X,\delta)$ can be embedded in $\ell_2^k$ with $k=\mathcal{O} (\log n )$ with distortion $\mathcal{O}(n  \log^{3/2} n)$.
\end{enumerate}
\end{thm}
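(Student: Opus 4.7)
The plan is to reduce the problem to the diameter diversity of the induced metric via Theorem~\ref{DivBounds}, and then invoke the embedding bounds for diameter diversities established in the previous proposition.

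First, let $(X,d)$ be the metric induced by $\delta$ and let $(X,\dd)$ be the corresponding diameter diversity. Theorem~\ref{DivBounds} gives, for every finite $A \subseteq X$,
\[\dd(A) \leq \delta(A) \leq (|A|-1)\,\dd(A) \leq (n-1)\,\dd(A),\]
so the identity map $(X,\delta)\to(X,\dd)$ has distortion at most $n-1$. Now apply the previous proposition to $(X,\dd)$. For part 1, there is a map $\phi \colon X \to \ell_1^k$ with $k=\mathcal{O}(\log n)$ satisfying $\dd(A) \leq \delta_1(\phi(A)) \leq \mathcal{O}(\log^2 n)\,\dd(A)$ for all finite $A \subseteq X$. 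Chaining this with the two-sided comparison between $\delta$ and $\dd$ yields
\[\frac{1}{n-1}\,\delta(A) \;\leq\; \delta_1(\phi(A)) \;\leq\; \mathcal{O}(\log^2 n)\,\delta(A),\]
which is an overall distortion of $\mathcal{O}(n \log^2 n)$. For part 2, repeat the same chaining with the mean-width embedding from the previous proposition in place of the $\ell_1$ embedding, using its $\mathcal{O}(\log^{3/2} n)$ distortion bound; the induced-metric factor of $n-1$ combines with it to give distortion $\mathcal{O}(n \log^{3/2} n)$.

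The main obstacle, and the reason the resulting bounds are almost certainly not optimal, is the factor $n-1$ coming from Theorem~\ref{DivBounds}. This bound is tight in the worst case (for instance, for Steiner diversities on uniform metrics, where $\delta_S(X)$ can be as large as $(n-1)\dd(X)$), so any substantive improvement must come from embeddings that exploit the multi-way structure of $\delta$ directly, rather than collapsing to the two-point induced metric before embedding. Sharpening this step is left open; here we do no more than combine the off-the-shelf metric embedding machinery with the diameter comparison.
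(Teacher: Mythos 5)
Your proof is correct and follows exactly the same route as the paper's (much terser) argument: compare $\delta$ to the diameter diversity of its induced metric via Theorem~\ref{DivBounds}, incurring a factor of $n-1$, and then chain with the embedding bounds for diameter diversities from the preceding proposition. The only thing you add is an explicit write-out of the chaining and a remark on the tightness of the $n-1$ factor, both of which are consistent with the paper.
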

\begin{proof}
Any diversity can be approximated by the diameter diversity of its induced metric with distortion $n$, as shown in Theorem~\ref{DivBounds}. This fact together with the previous theorem gives the required bounds.
\end{proof}

From upper bounds we switch to lower bounds. Any embedding of diversities with distortion $K$ induces an embedding of the underlying metric with distortion at most $K$. Hence we can use the examples from metrics 
\cite{Leighton88} to establish that there are diversities which cannot be embedded in $\ell_1$ with better than an  $\Omega(\log n)$ distortion. 

We have been able to obtain slightly tighter lower bounds for embeddings into $\ell_1^k$ where $k$ is bounded. 

\begin{prop} \label{prop:noBourgain}
Let $(X,\delta)$ be the $n$-point diversity with $\delta(A) = |A|-1$ for all non-empty $A \subseteq X$. Then the minimal distortion embedding of $(X,\delta)$ into $\ell_1^k$ has distortion at least $(n-1)/k$.
\end{prop}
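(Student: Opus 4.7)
The plan is straightforward. The key observation is that on this particular diversity the two inequalities defining distortion pull in opposite directions when we look at a singleton pair versus the whole set. Suppose $\phi \colon X \to \ell_1^k$ is an embedding with distortion $c = c_1 c_2$, so that
\[
\tfrac{1}{c_1}\,\delta(A) \;\leq\; \delta_1(\phi(A)) \;\leq\; c_2\,\delta(A)
\]
for all finite $A \subseteq X$.

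First I would apply the upper bound on pairs. Since $\delta(\{a,b\}) = 1$ for all distinct $a,b \in X$, the image satisfies $\|\phi(a)-\phi(b)\|_1 \leq c_2$ for every pair. Consequently, in each coordinate $i \in \{1,\ldots,k\}$ the spread $\max_{a \in X} \phi(a)_i - \min_{a \in X} \phi(a)_i$ is bounded by $c_2$, and summing over the $k$ coordinates gives $\delta_1(\phi(X)) \leq k\, c_2$.

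Next I would apply the lower bound on the entire set. We have $\delta(X) = n-1$, so $\delta_1(\phi(X)) \geq (n-1)/c_1$. Combining the two bounds,
\[
\frac{n-1}{c_1} \;\leq\; k\, c_2,
\]
which rearranges to $c_1 c_2 \geq (n-1)/k$. Since this holds for any factorization $c = c_1 c_2$ of the distortion, we obtain $c \geq (n-1)/k$.

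There is no real obstacle here; the only subtlety is recognizing that the diameter-diversity identity $\delta_1(\phi(A)) = \sum_i \max_{a,b \in A}|\phi(a)_i - \phi(b)_i|$ means that one controls the coordinate-wise spread by controlling pairwise $\ell_1$ distances, even though $A$ may have arbitrarily many points. Once that is observed, the pair-versus-whole-set comparison yields the bound immediately.
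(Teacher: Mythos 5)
Your proof is correct and follows essentially the same route as the paper: the crux in both is the estimate $\delta_1(\phi(X)) \le k\cdot\max_{a,b}\|\phi(a)-\phi(b)\|_1$ (the paper cites Lemma~\ref{lem:divRk} for this, while you re-derive it coordinate-by-coordinate), combined with comparing the whole set $X$ against a single pair. No issues.
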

\begin{proof}
For any embedding $\phi$ of $(X,\delta)$, Lemma~\ref{lem:divRk} shows that 
\[
\delta_1(\phi(X)) \leq k \, \diam(\phi(X)) = k\, d(\phi(a) , \phi(b))
\]
for some $a,b \in X$, $a \neq b$.
The distortion of $\phi$ is  equal to 
\[
\max_{A \subseteq X} \frac{\delta(A)}{\delta_1(\phi(A))} \cdot \max_{B \subseteq X} \frac{ \delta_1(\phi(B))}{\delta(B)}.
\]
Taking $A=X$ and $B=\{a,b\}$ shows that the distortion is at least 
 $(n-1)/k$.
\end{proof}

A consequence of Proposition~\ref{prop:noBourgain} is that there will, in general, be no embedding of diversities in $\ell_1$ for which both the distortion and dimension is $\mathcal{O}(\log n)$, or indeed polylog, ruling out a direct translation of the classical embedding results for finite metrics. Even so, we suspect that the upper bounds achieved in Theorem~\ref{thm:upperb} can still be greatly improved.

\section{The geometry of hypergraphs}

Having reviewed diversities, $\ell_1$ diversities, and the diversity embedding problems, we return to their application in combinatorial optimization. We will here establish analogous results to those of  \cite{Linial95} and \cite{Gupta04} for hypergraphs and diversity embeddings into $\ell_1$.  We first state the extensions of maximum multicommodity flows and minimum cuts a little more formally.

Given a hypergraph $H=(V,E)$, non-negative weights $C_e$ for $e\in E$ and  $S \subseteq V$, the goal is find the maximum weighted sum of  minimal connected sub-hypergraphs covering $S$ without exceeding  the capacity of any hyperedge. Let $\mathcal{\mathcal{T}}_S$ be set of all minimal connected sub-hypergraphs of $H$ that include $S$.  For each sub-hypergraph $t\in \mathcal{T}_S$ assign weight  $z_t$. We consider the following generalization of {\em fractional Steiner tree packing} \cite{Jain03} which we call \emph{maximum hypergraph Steiner packing}: Identify $z_t$ satisfying the LP:
\begin{equation*}
\begin{array}{rrcll}
\mbox{maximize} & \sum_{t \in  \mathcal{T}_{S} } z_t & &  \\
\mbox{subject to} &\sum_{t \in \mathcal{T}_{S} : e \in t} z_t & \leq & C_e &  \mbox{for all $e\in E$},  \\
& z_t & \geq & 0, &  \mbox{for all $t \in \mathcal{T}_{S}$}.
\end{array}
\end{equation*} 
As before, if we define $C_e$ for all subsets $e$ of $V$, and let it be zero for $e \not\in E$, we can  drop the dependence of the problem on $E$. The reference \cite{Kiraly08} studies an oriented version of this problem.

As with flows, maximum hypergraph Steiner packing has a multicommodity version. For each subset $S$ of $V$ suppose we have  non-negative demand $D_S$.
%Also, instead of having to worry about $E$, let's just say that if $e \not\in E$ then $C_e=0$. 
We view $D$ and $C$ as non-negative vectors indexed by all subsets of $V$.
Suppose we want to simultaneously connect up  all $S \subseteq V$ with minimal connected sub-hypergraphs carrying flow $f D_S$ for all $S \subseteq V$ and we want to maximize $f$. The corresponding optimization problem is:
\begin{equation} \label{eqn:maxflowLP}
\begin{array}{rrcll}
 \mbox{maximize} &  f & & \\
\mbox{subject to}  &  \sum_S \sum_{t \in \mathcal{T}_{S} \colon R \in t} z_{t,S} & \leq & C_R, & \mbox{for all }R \subseteq V, \\
& \sum_{t \in \mathcal{T}_{S}} z_{t,S} & = & f \cdot D_S, & \mbox{for all } S\subseteq V,  \\
&  z_{t,S} & \geq & 0, & \mbox{for all } S\subseteq V,  t \in \mathcal{T}_{S}.
\end{array}
\end{equation} 
Note that we use $z_{t,S}$ rather than just $z_t$ because the same connected sub-hypergraph  $t$ might cover more than one set $S$ in the hypergraph. We call the optimal value of $f$ for this problem $\mbox{MaxHSP}(V,C,D)$, for \emph{maximum multicommodity hypergraph Steiner packing}.

Next we define the appropriate analogues of the min-cut problem, which we call \emph{minimum hypergraph  cut}. As before, we let  $\partial U$  be the set of hyperedges which have endpoints in both $U$ and $V \setminus U$, and we make the simplifying assumption that every subset is a hyperedge, including any missing hyperedges with capacity zero. We define
\[
\mbox{MinHypCut}(V,C,D) = \min_{U \subseteq V} \frac{ \sum_{A \in \partial U }  C_A }
{\sum_{S \in \partial U} D_S}.\]

Below we will show that 
\(
\mathrm{MaxHSP}(V,C,D) \leq \mathrm{MinHypCut}(V,C,D).
\)
We define
\[
\gamma(V,C,D) = \frac{\mathrm{MinHypCut}(V,C,D)}{\mathrm{MaxHSP}(V,C,D)}.
\]
We say that a non-negative vector $C$  is \emph{supported on the hypergraph} $H=(V,E)$ if  $C_e = 0$ for $e \not\in E$.
Then for any hypergraph $H$  we define $\gamma(H)$ to be the greatest value of $\gamma(V,C,D)$ over all nonnegative $C$ and $D$ such that $C$ is supported on $H$.
 
  We say that a diversity $\delta$ on $V$ is \emph{supported on} $H=(V,E)$ if it is the hypergraph Steiner diversity of $H$ for some set of non-negative weights $C_e$ for $e \in E$. 
For any diversity $\delta$ on $V$ we define $k_1(\delta)$ to be the minimal distortion between $\delta$ and an $\ell_1$-embeddable diversity on $V$. For any hypergraph $H$ we define $k_1(H)$ to be the maximum of $k_1(\delta)$ over all diversities $\delta$ supported on $H$.  The major result for this section is that  for all hypergraphs $H$
 \[
 k_1(H) = \gamma(H).
 \]
 The fact that $\gamma(H) \leq k_1(H)$  (our Theorem 1)  is the analogue of  results in Section 4 of  \cite{Linial95} and the fact that equality holds  (our Theorem 2) is the analogue  of  Theorem 3.2 in \cite{Gupta04}.

 \begin{prop} \label{prop:characterizeMaxHSP}
 For all $V,C,D$, 
 \[
{\mathrm{MaxHSP}}(V,C,D)=  \min_{\delta \in \Delta(V)} \frac{ C \cdot \delta}{ D \cdot \delta}
\]
where $\Delta(V)$ is the set of all diversities on $V$. In particular, the optimal $\delta \in \Delta(V)$ is supported on the hypergraph $H=(V,E)$ where $E$ is  the set of all  $e$ such that $C_e>0$.
\end{prop}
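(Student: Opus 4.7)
The plan is to apply LP duality to the maximum hypergraph Steiner packing LP (\ref{eqn:maxflowLP}) and then reinterpret the dual variables as a diversity using Theorem~\ref{thm:SteinMax}. First I would take the dual: assigning a non-negative multiplier $y_R$ to each capacity constraint and a free multiplier $x_S$ to each demand equality, the dual LP is
\begin{equation*}
\begin{array}{rrcll}
\text{minimize} & \sum_R C_R y_R & & & \\
\text{subject to} & \sum_S D_S x_S & = & 1, & \\
& x_S & \leq & \sum_{R \in t} y_R, & \text{for all } S \subseteq V,\ t \in \mathcal{T}_S,\\
& y_R & \geq & 0, & \text{for all } R \subseteq V.
\end{array}
\end{equation*}
Under the mild assumption that $D_S = 0$ when $|S| \leq 1$, the primal is finite-dimensional, feasible, and bounded, so strong LP duality identifies $\mathrm{MaxHSP}(V,C,D)$ with the dual optimum.

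Next I would eliminate $x$ and reformulate. For fixed $y \geq 0$, view $y$ as a weight function on the subsets of $V$ and let $\delta_y$ denote the hypergraph Steiner diversity with these weights, so $\delta_y(S) = \min_{t \in \mathcal{T}_S} \sum_{R \in t} y_R$. The second block of dual constraints is then equivalent to $x_S \leq \delta_y(S)$; since $D \geq 0$, a feasible $x$ exists if and only if $\sum_S D_S \delta_y(S) \geq 1$, and homogeneity of $\delta_y$ in $y$ lets us rescale to equality. The dual therefore reduces to
\[\min_{y \geq 0} \frac{C \cdot y}{D \cdot \delta_y}.\]
I would then use Theorem~\ref{thm:SteinMax} to swap the minimum over weight vectors $y$ for a minimum over diversities $\delta \in \Delta(V)$. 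In one direction, $\delta_y$ is itself a diversity with $\delta_y(R) \leq y_R$, so $C \cdot \delta_y \leq C \cdot y$ while the denominator is unchanged, giving $\min_\delta (C\cdot\delta)/(D\cdot\delta) \leq \min_y (C\cdot y)/(D\cdot\delta_y)$. Conversely, given any $\delta$, setting $y_R = \delta(R)$ makes $\delta_y \geq \delta$ pointwise by Theorem~\ref{thm:SteinMax} (since $\delta_y$ is the maximal diversity dominated by $y$), hence $D\cdot\delta \leq D\cdot\delta_y$ and $C\cdot y = C\cdot\delta$, yielding the reverse inequality.

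For the ``in particular'' statement, I would start from any optimal $\delta^\star$, set $E = \{e : C_e > 0\}$ and $w(e) = \delta^\star(e)$ for $e \in E$, and define $\delta'$ to be the hypergraph Steiner diversity of $(V,E)$ with weights $w$. Theorem~\ref{thm:SteinMax} gives $\delta^\star \leq \delta'$ pointwise (since $\delta^\star$ is a diversity bounded on $E$ by $w$), while $\delta'(e) \leq w(e) = \delta^\star(e)$ for $e \in E$ forces $\delta'(e) = \delta^\star(e)$ on $E$. Hence $C \cdot \delta' = C \cdot \delta^\star$ and $D \cdot \delta' \geq D \cdot \delta^\star$, so $\delta'$ is also optimal and, by construction, supported on $H$. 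The main obstacle is the reduction in the second paragraph: showing that minimizing over raw weight vectors equals minimizing over diversities is the crux of the result and depends essentially on the two-sided maximality characterization of the hypergraph Steiner diversity provided by Theorem~\ref{thm:SteinMax}.
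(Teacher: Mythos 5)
Your proposal is correct and follows essentially the same route as the paper: dualize the packing LP, observe that the second block of dual constraints says exactly that the weights dominate the hypergraph Steiner diversity they generate, and use the maximality characterization of Theorem~\ref{thm:SteinMax} to pass between weight vectors and diversities in both directions, including the same replacement argument for the ``in particular'' clause. The only differences are organizational (you keep the equality constraint with a free dual multiplier and go through the intermediate ratio $\min_y (C\cdot y)/(D\cdot \delta_y)$, whereas the paper splits the equality and works with the normalized constraint $D\cdot\delta\geq 1$ before passing to the ratio form), and you are slightly more careful than the paper about feasibility/boundedness and about which weights to use when restricting to the support of $C$.
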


\begin{proof}

We rewrite the linear program \eqref{eqn:maxflowLP} in standard form. We break the equality constraint into $\leq$ and $\geq$ and note that we can omit the $\geq$ constraint, because it will never be active. Then we get
\begin{equation} \label{eqn:maxflowStandard}
\begin{array}{rrcll}
 \mbox{maximize }&  f &  & & \\
\mbox{subject to} & \sum_S \sum_{t \in \mathcal{T}_{S} : R\in t} z_{t,S} & \leq & C_R, & \mbox{for all }R \subseteq V, \\
& \left( \sum_{t \in \mathcal{T}_{S}} -z_{t,S} \right) + f \cdot D_S & \leq  & 0, & \mbox{for all } S \subseteq V, \\
& z_{t,S} & \geq & 0, & \mbox{for all } S \subseteq V,  t \in \mathcal{T}_{S}.
\end{array}
\end{equation} 
Let $d_R$ be the dual variables corresponding to the first set of inequality constraints, and let $y_S$ be the inequalities corresponding to the second set of inequality constraints. Then  the dual problem is
\begin{equation} \label{eqn:maxflowDual}
\begin{array}{rrcll}
 \mbox{minimize} & \sum_{R \subseteq V} C_R d_R & & \\
\mbox{subject to} & \sum_{S} y_S D_S  & \geq  & 1 \\
& \sum_{R \in t} d_R & \geq & y_S, & \mbox{for all }S \subseteq V, t \in \mathcal{T}_{S}, \\
 & d_R & \geq & 0, & \mbox{for all }R \subseteq V, \\
 & y_S & \geq & 0, & \mbox{for all }S \subseteq V.
\end{array}
\end{equation}
By strong duality, \eqref{eqn:maxflowStandard} and \eqref{eqn:maxflowDual} have the same optimal values.
Next we show that  \eqref{eqn:maxflowDual}  is equivalent to 
\begin{equation} \label{eqn:maxflowDiversity}
\begin{array}{rrcll}
\mbox{minimize} &  \sum_{R \subseteq V} C_R \delta(R), & & \\
\mbox{subject to} & \sum_{S \subseteq V} D_S  \delta(S) & \geq & 1, \\
& \delta \mbox{ is a diversity.} & & 
\end{array}
\end{equation}
where the minimum is taken over all diversities. 

To see the equivalence of \eqref{eqn:maxflowDual} and   \eqref{eqn:maxflowDiversity}, suppose that $\delta$ is a diversity solving \eqref{eqn:maxflowDiversity}. Let $d_R = \delta(R)$ for all $R \subseteq V$ and $y_S = \delta(S)$ for all $S \subseteq V$.
Then the objective function of \eqref{eqn:maxflowDual} is the same, the second line of \eqref{eqn:maxflowDual} still holds,
the third line holds by the triangle inequality for diversities, and the fourth and fifth line hold by the non-negativity of diversities.

To see the other direction, suppose $d_R$ and $y_S$ solve \eqref{eqn:maxflowDual}. Let $\delta$ be the Steiner diversity on $V$  generated edge weights $d_R$, $R \subseteq V$. Since $\delta(R) \leq d_R$ for all $R$, this can only decrease the objective function. 
Also, by the definition of $\delta$, $\delta(S) \geq y_S$ for all $S$, so the inequality of \eqref{eqn:maxflowDiversity} is satisfied too. Thus the two LPs have the same minima.

Note we can assume that $\delta$ is the Steiner diversity for a weighted hypergraph with hyperedges $\{ R \colon C_R >0\}$. 
% march edit
If not, we can replace $\delta$ with the Steiner diversity on the hypergraph whose hyperedges are the set $\{ R \colon C_R >0\}$ and whose weights are the $C_R$. This Steiner diversity will have the same value on the hyperedges as $\delta$, so the objective function will not change, but the value can only increase on other subsets of $V$, and so the constraint is still satisfied.

Finally, \eqref{eqn:maxflowDiversity} is equivalent to 
\begin{equation} \label{eqn:maxflowFinal}
\begin{array}{rrcll}
\mbox{minimize} &  \frac{ \sum_{R \subseteq V} C_R \delta(R)  }{ \sum_{S \subseteq V} D_S \delta(S)}, && \\
\mbox{subject to} & \delta \mbox{ is a diversity.} & &
\end{array}
\end{equation}
This is because, any solution of \eqref{eqn:maxflowDiversity} will only have a smaller or equal value for the objective function of \eqref{eqn:maxflowFinal}. And any solution of \eqref{eqn:maxflowFinal} can be rescaled without changing the objective function so that 
$\sum_{S \subseteq V} D_S  \delta(S) = 1$, giving a feasible solution to \eqref{eqn:maxflowDiversity} with the same objective function.
This rescaling will not change the hypergraph that $\delta$ is supported on.
\end{proof}

\begin{prop} For all $V, C, D$,
\[
\mathrm{MinHypCut}(V,C,D)=  \min_{\delta \in \Delta_1(V)}  \frac{ C \cdot \delta}{ D \cdot \delta}
\] 
where $\Delta_1(V)$ is the set of all $\ell_1$-embeddable diversities on $V$.
 \end{prop}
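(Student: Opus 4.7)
The plan is to use Proposition~\ref{charact_L1_embed}, which states that every $\ell_1$-embeddable diversity on a finite set $V$ is a non-negative combination of cut diversities $\delta_U$. The first step is the key observation that for each $U \subseteq V$, the cut diversity $\delta_U$ satisfies $\delta_U(A)=1$ exactly when $A \in \partial U$ and $0$ otherwise. Therefore
\[
\frac{C \cdot \delta_U}{D \cdot \delta_U} = \frac{\sum_{A \in \partial U} C_A}{\sum_{S \in \partial U} D_S},
\]
and consequently $\mathrm{MinHypCut}(V,C,D) = \min_{U \subseteq V} \frac{C \cdot \delta_U}{D \cdot \delta_U}$, where we adopt the usual convention that ratios with zero denominator are $+\infty$.

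Next, I would prove the inequality $\min_{\delta \in \Delta_1(V)} \frac{C \cdot \delta}{D \cdot \delta} \geq \mathrm{MinHypCut}(V,C,D)$. Take any $\ell_1$-embeddable diversity $\delta$; by Proposition~\ref{charact_L1_embed}, write $\delta = \sum_U \lambda_U \delta_U$ with $\lambda_U \geq 0$. Then
\[
\frac{C \cdot \delta}{D \cdot \delta} = \frac{\sum_U \lambda_U (C \cdot \delta_U)}{\sum_U \lambda_U (D \cdot \delta_U)} \geq \min_{U:\, \lambda_U >0} \frac{C \cdot \delta_U}{D \cdot \delta_U} \geq \mathrm{MinHypCut}(V,C,D),
\]
where the first inequality is the standard mediant bound that a ratio of non-negative sums is at least the smallest component ratio. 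For the reverse inequality, note that each cut diversity $\delta_U$ lies in $\Delta_1(V)$, so $\min_{\delta \in \Delta_1(V)} \frac{C \cdot \delta}{D \cdot \delta} \leq \min_U \frac{C \cdot \delta_U}{D \cdot \delta_U} = \mathrm{MinHypCut}(V,C,D)$.

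The only technicality to manage is the possibility that $D \cdot \delta_U = 0$ for some cuts (so that the corresponding ratio is $+\infty$ and contributes nothing to the minimum); these cuts simply drop out of both sides. If $D \cdot \delta = 0$ for the chosen $\delta$ itself, then its representation only uses such cuts, giving an objective of $+\infty$, which again does not affect the minimum. I do not anticipate a genuine obstacle here: once one recognises that cut diversities are the extreme rays of the $\ell_1$-embeddable cone (Proposition~\ref{charact_L1_embed}) and that the objective $C \cdot \delta / D \cdot \delta$ is scale-invariant and quasi-concave over this cone, the identification with $\mathrm{MinHypCut}$ is immediate.
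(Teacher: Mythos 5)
Your proposal is correct and follows essentially the same route as the paper: identify $\mathrm{MinHypCut}$ with the minimum of $C\cdot\delta_U/D\cdot\delta_U$ over cut diversities, decompose an arbitrary $\delta\in\Delta_1(V)$ into non-negative combinations of cut diversities via Proposition~\ref{charact_L1_embed}, and bound the ratio below by the smallest component ratio. The only difference is cosmetic: you justify that last step by the standard mediant inequality, whereas the paper uses a gradient/boundary argument for $G(a)=\frac{a\cdot x}{a\cdot y}$; your justification is the cleaner of the two.
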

 
 \begin{proof}
 For any cut $(U, V \setminus U)$ of $V$,  let $\delta_U$ be the corresponding cut diversity. Then by definition we have that
 \[
 \mathrm{MinHypCut}(V,C,D)=\min_{U \subseteq V}  \frac{ C \cdot \delta_U}{ D \cdot \delta_U},
 \]
 where we restrict $U$ to values where the denominator is non-zero.
 We  need to show that this value is not decreased by taking the minimum over all $\ell_1$-embeddable diversities instead.
 
 Let $\delta$ be $\ell_1$-embeddable diversity that minimizes the ratio. By Proposition \ref{charact_L1_embed}, $\delta$ can be expressed as a finite linear combination of cut-diversities: $\delta = \sum_i a_i \delta_{U_i}$ for some non-negative $a_i$ and some subsets  $U_i$ of $V$. Let $I$ be the index $i$ that minimizes $C \cdot \delta_{U_i} / D \cdot \delta_{U_i}$. Then we claim that
 \[
 \frac{ C \cdot \delta}{ D \cdot \delta} \geq \frac{ C \cdot \delta_{U_I}}{ D \cdot \delta_{U_I}}
 \]
 To see this, observe that
 \[
 \frac{ C \cdot \delta}{ D \cdot \delta} = \frac{\sum_i  a_i  (C \cdot \delta_{U_i})}{\sum_i a_i (D \cdot \delta_{U_i})}
 \]
 
 Let
 \[
 G(a) = \frac{a \cdot x}{a \cdot y}
 \]
 for vectors $a$ with $a_i \geq 0$ for all $i$,
 where $x$ and $y$ are non-negative vectors of the same size. We claim that $G$ attains its minimum on this domain at a value of $a$ consisting of a vector with a single non-zero entry. To show this, we compute the gradient of $G$
 \[
 \nabla G(a) = \frac{1}{(a \cdot y)^2} [ x (a \cdot y) - y (a \cdot x) ].
 \]
 If $x$ and $y$ are parallel then the result immediately follows so assume that they are not. Then $\nabla G$ is not zero anywhere in the domain, and so the maximum of $G$ must be taken on boundary of the domain. So at least one $a_i$ must be zero. Discard this term from the numerator and the denominator of $G$. Then repeat the argument for $G$ as a function of a vector of one fewer entries. Repeating gives a single non-zero value, which may be set to 1.
 \end{proof}
 
 The following theorem implies Theorem~\ref{major1}.
 
 \begin{thm} \label{thm:kiwi} For all hypergraphs $H=(V,E)$, non-negative hyperedge capacities $C$ supported on $H$, and non-negative $D$
 \[
 \mathrm{MaxHSP}(V,C,D)  \leq \mathrm{MinHypCut}(V,C,D) \leq k_1(H) \mathrm{MaxHSP}(V,C,D).
 \]
 \end{thm}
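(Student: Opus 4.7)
The plan is to leverage the two preceding propositions, which reformulate both $\mathrm{MaxHSP}$ and $\mathrm{MinHypCut}$ as ratio-minimization problems over a class of diversities, and then compare these classes through the distortion $k_1(H)$.

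For the left inequality $\mathrm{MaxHSP}(V,C,D) \leq \mathrm{MinHypCut}(V,C,D)$, the argument is essentially tautological. By Proposition~\ref{prop:characterizeMaxHSP}, $\mathrm{MaxHSP}(V,C,D)$ equals the minimum of $(C\cdot\delta)/(D\cdot\delta)$ over all diversities $\delta \in \Delta(V)$, while the other proposition gives $\mathrm{MinHypCut}(V,C,D)$ as the minimum of the same ratio restricted to $\ell_1$-embeddable diversities $\Delta_1(V)$. Since every $\ell_1$-embeddable diversity is a diversity, $\Delta_1(V) \subseteq \Delta(V)$, and minimizing over a smaller set cannot decrease the value. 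This gives the first inequality immediately.

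For the right inequality, I would choose an optimal diversity $\delta^*$ for the $\mathrm{MaxHSP}$ characterization. Proposition~\ref{prop:characterizeMaxHSP} guarantees that $\delta^*$ can be taken to be supported on $H$, so by the definition of $k_1(H)$ we have $k_1(\delta^*) \leq k_1(H)$. Since $V$ is finite and $\Delta_1(V)$ is a finitely-generated cone by Proposition~\ref{charact_L1_embed}, the infimum defining $k_1(\delta^*)$ is attained: there exists an $\ell_1$-embeddable diversity $\tilde\delta$ on $V$ and positive constants $c_1,c_2$ with $c_1 c_2 \leq k_1(H)$ and
\[
\tfrac{1}{c_1}\delta^*(A) \leq \tilde\delta(A) \leq c_2\,\delta^*(A) \qquad \text{for all } A \subseteq V.
\]
Using the upper bound in the numerator and the lower bound in the denominator of $(C\cdot\tilde\delta)/(D\cdot\tilde\delta)$ (both $C$ and $D$ are non-negative), we get
\[
\frac{C\cdot\tilde\delta}{D\cdot\tilde\delta} \;\leq\; \frac{c_2\,(C\cdot\delta^*)}{(1/c_1)\,(D\cdot\delta^*)} \;=\; c_1 c_2 \cdot \frac{C\cdot\delta^*}{D\cdot\delta^*} \;\leq\; k_1(H)\cdot \mathrm{MaxHSP}(V,C,D).
\]
Since $\tilde\delta \in \Delta_1(V)$, the second proposition yields $\mathrm{MinHypCut}(V,C,D) \leq (C\cdot\tilde\delta)/(D\cdot\tilde\delta)$, completing the proof.

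The only real subtlety is the claim that $\delta^*$ can be taken supported on $H$, which is used to pass from $k_1(\delta^*)$ to $k_1(H)$; this is supplied directly by Proposition~\ref{prop:characterizeMaxHSP}. A secondary technical point is handling the infimum in the definition of $k_1$, which is painless here because $V$ is finite and $\Delta_1(V)$ is a closed cone, so either the infimum is attained or a standard $\epsilon$-perturbation works. No genuine obstacle arises — the work is all in the preparatory propositions and the structural results about $L_1$-embeddable diversities.
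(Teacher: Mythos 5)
Your proof is correct and follows essentially the same route as the paper: the first inequality from $\Delta_1(V)\subseteq\Delta(V)$ together with the two ratio characterizations, and the second by taking the optimal $\delta$ supported on $H$ and bounding the cut via its minimal-distortion $\ell_1$-embeddable approximation. The only cosmetic difference is that you carry the constants $c_1,c_2$ explicitly where the paper normalizes to $\delta\leq\hat\delta\leq k_1(H)\delta$.
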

 \begin{proof}
 Since $\Delta_1(V) \subseteq \Delta(V)$, the first inequality follows from the previous two results.

For the second inequality, given $V,C,D$ and hypergraph $H=(V,E)$ supporting $C$, let $\delta$ solve the MaxHSP linear program \eqref{eqn:maxflowDiversity}. By Proposition~\ref{prop:characterizeMaxHSP}  we know that  $\delta$ is supported on $H$.  Let $\hat{\delta}$ be the minimal-distortion $\ell_1$ embeddable 
diversity of $\delta$. We may assume that $\delta \leq \hat{\delta} \leq k_1(H) \delta$. Then
\[
\mathrm{MinHypCut}(V,C,D) \leq 
\frac{C \cdot \hat{\delta}}{D \cdot \hat{\delta}} \leq k_1(H) \frac{C \cdot \delta}{D \cdot \delta} = k_1(H) \mathrm{MaxHSP}(V,C,D)
\]
as required.
\end{proof}

The following theorem implies Theorem~\ref{major2}.
	
\begin{thm} \label{thm:hyrax}
For all hypergraphs $H$ there exist weights $C, D$ with $C$ supported on $H$ such that 
\[
k_1(H) = \frac{\mathrm{MinHypCut}(V,C,D)}{\mathrm{MaxHSP}(V,C,D) }.
\]
Hence the upper bound in Theorem~\ref{thm:kiwi} is tight.
\end{thm}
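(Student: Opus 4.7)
The plan is to adapt the LP-duality argument of Gupta et al.\ \cite{Gupta04}, with diversities and the hypergraph Steiner diversity playing the roles of metrics and shortest-path metrics. First I would fix a diversity $\delta^*$ supported on $H$ attaining $k_1(\delta^*) = k_1(H)$; such a $\delta^*$ exists since, after normalizing the total weight, the diversities supported on $H$ form a compact set and $k_1(\cdot)$ depends continuously on $\delta$ (it is the value of a finite LP). By Theorem~\ref{thm:SteinMax} I may assume the hyperedge weights realising $\delta^*$ satisfy $w_R = \delta^*(R)$ for all $R \in E$.

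Next, using Proposition~\ref{charact_L1_embed} to replace $\ell_1$-embeddable diversities by non-negative combinations of cut diversities, the minimum-distortion embedding of $\delta^*$ becomes a finite LP with variables $c$ and $\lambda_U \geq 0$:
\begin{equation*}
\min c \text{ subject to } \delta^*(A) \leq \sum_{U \subseteq V} \lambda_U \delta_U(A) \leq c\, \delta^*(A) \text{ for every } A \subseteq V,
\end{equation*}
with optimum $k_1(H)$. I would then take the LP dual, obtaining non-negative variables $x_A, y_A$ for $A \subseteq V$ satisfying
\begin{equation*}
\sum_A x_A \delta^*(A) = 1, \qquad \sum_A x_A \delta_U(A) \geq \sum_A y_A \delta_U(A) \text{ for every } U \subseteq V,
\end{equation*}
and with dual objective $\sum_A y_A \delta^*(A) = k_1(H)$ by strong duality.

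The step I expect to be the main obstacle is that these dual weights $x_A$ need not be supported on $E$, whereas the theorem demands $C$ to be. I would resolve this by routing each $x_A$ through a witnessing sub-hypergraph. For each $A$ with $x_A > 0$, choose a connected $E'_A \subseteq E$ covering $A$ with $\sum_{R \in E'_A} w_R = \delta^*(A)$, and set
\[
C_R := \sum_{A : R \in E'_A} x_A \text{ for } R \in E, \qquad D_S := y_S.
\]
Then $C$ is supported on $H$ by construction, and swapping summations yields $C \cdot \delta^* = \sum_A x_A \delta^*(A) = 1$ while $D \cdot \delta^* = k_1(H)$. The key combinatorial fact is that any connected sub-hypergraph containing a set $A$ straddling a cut $U$ must itself contain a hyperedge straddling $U$, so $\sum_{R \in E'_A} \delta_U(R) \geq \delta_U(A)$. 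Combined with the dual constraint this gives $C \cdot \delta_U \geq \sum_A x_A \delta_U(A) \geq D \cdot \delta_U$ for every $U$.

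To conclude, Proposition~\ref{prop:characterizeMaxHSP} applied with $\delta = \delta^*$ gives $\mathrm{MaxHSP}(V,C,D) \leq 1/k_1(H)$, while the cut inequality together with the cut-diversity characterization of $\mathrm{MinHypCut}$ gives $\mathrm{MinHypCut}(V,C,D) \geq 1$. Hence $\gamma(V,C,D) \geq k_1(H)$, and combining with the reverse inequality from Theorem~\ref{thm:kiwi} gives the required equality. If the supremum defining $k_1(H)$ is not attained, the same construction applied to a maximizing sequence of diversities supported on $H$, followed by a standard compactness argument on the normalized space of $(C,D)$, recovers the result.
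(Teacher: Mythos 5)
Your proposal is correct and follows essentially the same route as the paper: fix an extremal diversity $\delta^*$ supported on $H$, use duality to produce non-negative $C,D$ with $C\cdot\mu \geq D\cdot\mu$ on the cone of $\ell_1$-embeddable diversities and $D\cdot\delta^*/C\cdot\delta^* = k_1(H)$, reroute the capacity on any non-hyperedge through a witnessing minimal connected sub-hypergraph (using that such a sub-hypergraph straddling a cut must contain a straddling hyperedge), and conclude via the LP characterizations of MaxHSP and MinHypCut together with Theorem~\ref{thm:kiwi}. The only difference is cosmetic: where the paper invokes Claim A.2 of Gupta et al.\ as a black box, you derive the same dual pair $(C,D)$ explicitly by writing the minimum-distortion problem as a finite LP over cut-diversity weights and dualizing, which is how that claim is proved anyway.
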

	
To prove this result, we will need a lemma from \cite{Gupta04} which we reproduce here.  
\begin{lem}[Claim A.2 of \cite{Gupta04}]
Let $v,u \in \Re^k$ be positive vectors. Define 
\[
H(v,u) = \max_i \frac{u_i}{v_i} \cdot \max_j \frac{v_j}{u_j}.
\]
If $S \subseteq \Re^k$ is a closed set of positive vectors, define $H(v,S)$ as $\min_{u\in S} H(v,u)$.
If $K \subset \Re^k$ is a closed convex cone, then 
\[
H(v,K) = \max_{C,D} \frac{D \cdot v}{C \cdot v},
\]
where the maximum is taken over all non-negative vectors $D,C \in \Re^k$ for which $\frac{D \cdot u}{C \cdot u} \leq 1$ for any $u \in K$.
%\begin{proof}
%This is Claim A.2 of \cite{Gupta04}.
%\end{proof}
\end{lem}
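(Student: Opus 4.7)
The plan is to prove the lemma via convex separation, after reformulating $H(v, K)$ as a linear program. The key observation is that $H(v, u)$ is scale-invariant in $u$, so for any $u \in K$ with $H(v, u) < \infty$ we may rescale so that $u \geq v$ componentwise (with equality in some coordinate), in which case $H(v, u) = \max_i u_i/v_i$ is just the least $c$ with $u \leq c v$. Hence
\[
H(v, K) \;=\; \min \bigl\{\, c : \exists\, u \in K,\ v \leq u \leq c v\,\bigr\}.
\]

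The easy inequality ($\geq$) is a direct chain: for any feasible $(u, c)$ and any non-negative $C, D$ satisfying $D \cdot u' \leq C \cdot u'$ for every $u' \in K$,
\[
D \cdot v \;\leq\; D \cdot u \;\leq\; C \cdot u \;\leq\; c\,(C \cdot v),
\]
so $(D \cdot v)/(C \cdot v) \leq c$. Optimizing both sides gives $\sup (D \cdot v)/(C \cdot v) \leq H(v, K)$.

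For the reverse inequality, fix any $c < H(v, K)$. Then the compact convex box $A_c = \{u : v \leq u \leq c v\}$ is disjoint from the closed convex cone $K$, so by strict separation of a closed set from a disjoint compact set there exist $w \in \Re^k$ and $\alpha \in \Re$ with $w \cdot u' > \alpha$ for all $u' \in K$ and $w \cdot u < \alpha$ for all $u \in A_c$. The cone structure of $K$ forces $\alpha < 0$ (take $u' = 0$) and $w \cdot u' \geq 0$ for every $u' \in K$ (scale $u'$ by $\lambda \to \infty$), so $w$ lies in the polar cone $K^{\ast}$. Decomposing $w = C - D$ into positive and negative parts with $C, D \geq 0$, the polar-cone condition becomes the admissibility constraint $D \cdot u' \leq C \cdot u'$ on $K$, and the coordinatewise maximum over $A_c$ gives $\max_{u \in A_c} w \cdot u = c (C \cdot v) - (D \cdot v)$; strict separation then yields $(D \cdot v)/(C \cdot v) > c$. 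Letting $c \uparrow H(v, K)$ finishes the proof.

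The main obstacle is the separation step: one must use the cone property, not just convexity, to force $w \in K^{\ast}$ and to absorb $\alpha$ into sign conditions, and then verify that $C \cdot v > 0$ so the ratio is well-defined. The latter holds whenever $K$ contains a strictly positive vector (which is precisely the case where $H(v, K)$ is finite), since any nonzero element of $K^{\ast}$ must then have a strictly positive coordinate; the degenerate case $H(v, K) = \infty$ is implicit in the problem and can be verified separately.
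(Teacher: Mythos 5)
The paper does not prove this lemma at all: it is imported verbatim as Claim A.2 of Gupta et al.\ \cite{Gupta04} and used as a black box in the proof of Theorem~\ref{thm:hyrax}, so there is no in-paper argument to compare yours against. Judged on its own, your separation proof is correct and is essentially the standard LP-duality route one would expect (and, as far as I recall, the route taken in the appendix of \cite{Gupta04}): the scale-invariance of $H(v,\cdot)$ reduces $H(v,K)$ to the least $c$ with $K\cap\{u: v\le u\le cv\}\neq\emptyset$; the chain $D\cdot v\le D\cdot u\le C\cdot u\le c\,(C\cdot v)$ gives the easy inequality; and strict separation of the box $A_c$ from the cone $K$, together with the decomposition $w=C-D$ into positive and negative parts and the identity $\max_{u\in A_c}w\cdot u=c\,(C\cdot v)-D\cdot v$, gives the reverse. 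Your handling of the two delicate points (that the cone structure forces $w\in K^{\ast}$, and that $C\cdot v>0$ because a nonzero $w\in K^{\ast}$ must have a positive coordinate when $K$ contains a strictly positive vector) is sound. Two small loose ends worth tying off: first, the lemma asserts a \emph{maximum} over $(C,D)$, whereas letting $c\uparrow H(v,K)$ only yields the supremum; attainment follows by normalizing the separating functionals $w_c$, passing to a convergent subsequence, and observing that the limit cannot have $C^{\ast}\cdot v=0$ since $v>0$. Second, the admissibility condition should be read in the linear form $D\cdot u\le C\cdot u$ on $K$ (as you use it) rather than as a literal ratio, to avoid division by zero at $u$ with $C\cdot u=0$. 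Neither point affects the substance of the argument.
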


\begin{proof} (of  Theorem \ref{thm:hyrax})
Let $\delta$ be a diversity supported by the hypergraph $H$ that maximizes $k_1(\delta)$, and define $\lambda = k_1(\delta) = k_1(H)$.
%and suppose the the minimal distortion approximation by an $\ell_1$-embeddable diversity has distortion $\lambda$.  
We need to show that 
\[
\lambda \leq  \max_{C,D} \frac{\mathrm{MinHypCut}(V,C,D)}{\mbox{MaxHSP}(V,C,D)},
\]
where the maximum is taken over all $C,D$ where $C$ is supported on $H$.
 
Let $v$ be given by $\delta$, and let $K$ be the cone of all $\ell_1$-embeddable diversities on $V$. Then $\lambda = H(v,K)$.  We  apply the lemma to show that 
 \[
\lambda= \max_{C,D} \frac{D \cdot \delta}{C \cdot \delta},
\]
where the maximum is taken over all non-negative vectors $C,D$  which satisfy the restriction $\frac{D \cdot \mu}{C \cdot \mu} \leq 1$ for any $\ell_1$-embeddable diversity $\mu$.  This tells us that there exists $C,D$ such that $\lambda = \frac{D \cdot \delta}{C \cdot \delta}$ and $\frac{D \cdot \mu}{C \cdot \mu} \leq 1$ for any $\ell_1$-embeddable diversity $\mu$.

  First we show that we may assume that $C$ is supported on $H$. Suppose that for some set $R \subseteq V$, 
 % march edit
  $R \not \in E$ we have $C_R>0$. Since $\delta$ is supported on $H$ there are hyperedges $h_1, \ldots, h_k$ that form a connected set covering $R$ with $\delta(R) = \sum_{i=1,\ldots, k} \delta(h_k)$. Define a new vector $C'$ by 
\begin{eqnarray*}
C'_R & = & 0, \\
C'_{h_i} & = & C_{h_i} + C_R, \mbox{ for } i = 1, \ldots, k, \mbox{and} \\
C'_S & = & C_S, \mbox{ otherwise.}
\end{eqnarray*}
Even with this new $C',D$ we still have
$\lambda=  \frac{D \cdot \delta}{C' \cdot \delta}$ and  $\frac{D \cdot \mu}{C' \cdot \mu} \leq 1$ for any $\ell_1$-embeddable diversity $\mu$. To see this,  first note that $C' \cdot \mu \geq C \cdot \mu$ so 
\[
\frac{D \cdot \mu}{C' \cdot \mu}  \leq \frac{D \cdot \mu}{C \cdot \mu} \leq 1.
\]
Secondly, since $\delta$ satisfies $\delta(R) = \sum_{i=1,\ldots, k} \delta(h_k)$ and these are the only sets on which $C$ is changed, it follows that $C' \cdot \delta = C \cdot \delta$.  We repeat this procedure until we have $C_R <0$ only if $R \in E$.  

Using this $C$ and $D$ gives us
\[
\lambda = \frac{D \cdot \delta}{C \cdot \delta} \leq \frac{ \min_{\mu \in \Delta_1(V)} (C \cdot \mu)/(D \cdot \mu)   }{ ( C \cdot \delta)/(D \cdot \delta)  } \leq \frac{\mathrm{MinHypCut}(V,C,D)}{\mathrm{MaxHSP}(V,C,D)}.
\]
\end{proof}

% march edit
%\bibliographystyle{plain}
\bibliographystyle{abbrvnat}

\end{document}